\tikzset{%
  >=latex,
  disk/.style={draw, circle, inner sep=0},
  maskedbg/.style={rectangle, fill=white, opacity=0.8, text opacity=1, inner sep=0}
}
\def\today{August 2, 2020}
\title{On the Approximation of Local Expansions of Laplace Potentials by the Fast Multipole Method\thanks{Submitted to the editors on~\today.}}
\author{Matt Wala%
  \thanks{Department of Computer Science, University of Illinois at Urbana-Champaign,
    201 N. Goodwin Ave, Urbana, IL 61801
    (\email{wala1@illinois.edu}, \email{andreask@illinois.edu}).}
  \and
  Andreas Klöckner\footnotemark[2]
}
\date{\today}
\newcommand{\norm}[1]{\left\lvert #1 \right\rvert}
\newcommand{\bignorm}[1]{\left\lVert#1\right\rVert}
\newcommand{\pot}[1]{\phi_{#1}}
\newcommand{\mpole}[2]{\mathcal{M}_{#1}^{#2}}
\newcommand{\local}[2]{\mathcal{L}_{#1}^{#2}}
\newcommand{\ilocloc}[4]{\substack{{#1} \in I(0, {#2}) \\ \cap I({#3}, {#4})}}
\newcommand{\tsup}[1]{\ensuremath{\sp{\text{#1}}}}
\newcommand{\transpose}[1]{#1\tsup{T}}
\newcommand{\htranspose}[1]{#1\tsup{H}}
\definecolor{qbxcolor}{RGB}{43,131,186}
\definecolor{srccolor}{RGB}{63,140,52}
\definecolor{localcolor}{RGB}{215,25,28}
\newcommand{\ilist}[2]{%
  \ifstrequal{#1}{1}{U_{#2}}{%
  \ifstrequal{#1}{2}{V_{#2}}{%
  \ifstrequal{#1}{3}{W_{#2}}{%
  \ifstrequal{#1}{3close}{W^\mathrm{close}_{#2}}{%
  \ifstrequal{#1}{3far}{W^\mathrm{far}_{#2}}{%
  \ifstrequal{#1}{4}{X_{#2}}{%
  \ifstrequal{#1}{4close}{X^\mathrm{close}_{#2}}{%
  \ifstrequal{#1}{4far}{X^\mathrm{far}_{#2}}{}%
}}%
}}%
}}%
}}
\newcommand{\ptpot}{\phi}
\newcommand{\innerprod}[2]{{(#1, #2)}_{\mathbb{S}^2}}
\newcommand{\quadr}{\mathcal{Q}}
\newcommand{\fourier}[1]{\mathcal{F}_{#1}}
\newcommand{\proj}[1]{\mathcal{P}_{#1}}
\definecolor{qbxcolor}{RGB}{43,131,186}
\definecolor{srccolor}{RGB}{63,140,52}
\definecolor{localcolor}{RGB}{215,25,28}
\begin{document}

\maketitle

\begin{abstract}
In this paper, we present a generalization of the classical error bounds of Greengard--Rokhlin for the Fast Multipole Method (FMM) for Laplace potentials in three dimensions, extended to the case of local expansion (instead of point) targets. We also present a complementary, less sharp error bound proven via approximation theory whose applicability is not restricted to Laplace potentials.
Our study is motivated by the GIGAQBX~FMM, an algorithm for the fast, high-order accurate evaluation of layer potentials near and on the source layer. GIGAQBX is based on the FMM, but unlike a conventional FMM, which is designed to evaluate potentials at point-shaped targets, GIGAQBX evaluates \emph{local expansions} of potentials at ball-shaped targets. Although the accuracy (or the \emph{acceleration error}, i.e., error due to the approximation of the potential by the fast algorithm) of the conventional FMM is well understood, the acceleration error of FMM-based algorithms applied to the  evaluation of local expansions has not been as well studied.
The main contribution of this paper is a proof of a set of hypotheses first demonstrated numerically in the paper \emph{A Fast Algorithm for Quadrature by Expansion in Three Dimensions}, which pertain to the accuracy of FMM approximation of local expansions of Laplace potentials in three dimensions. These hypotheses are also essential to the three-dimensional error bound for GIGAQBX, which was previously stated conditionally on their truth and can now be stated unconditionally.
\end{abstract}

\begin{keywords}
  expansion, fast multipole method, integral equations,
  spherical harmonics, Laplace equation
\end{keywords}

\begin{AMS}
  65R20, 
  65T99, 
  42C10, 
  31B10, 
  65D32  
\end{AMS}

\section{Introduction}

A variety of numerical methods for problems in computational physics frequently require the evaluation of potentials due to a set of $N$ sources at a set of $M$ targets, where $N$ and $M$ may be large. The pervasive concern in the implementation of such methods is reduction of the otherwise $O(NM)$ computational cost involved in this operation. Fast algorithms such as the Fast Multipole Method ~(FMM,~\cite{carrier:1988:adaptive-fmm}) are used to accomplish this, enabling methods with computational scaling that is linear in the number of sources and targets. Most FMMs evaluate \emph{point potentials}, which are sums of the form
\begin{equation}
    \label{eqn:pt-pot}
    \Phi(\bm{t}_j)
    =
    \sum_{i=1}^N w_i \mathcal{G}(\bm{t}_j, \bm{s}_i), \quad j = 1, \ldots, M.
\end{equation}
Here, $\{\bm{s}_i\} \subset \mathbb{R}^3$ is a set of source particles, $\{\bm{t}_j\} \subset \mathbb{R}^3$ is a set of target particles, and $\{w_i\} \subset \mathbb{R}$ is a set of source weights. The kernel function $\mathcal{G}(\cdot, \cdot)$ used in this paper is the free-space three-dimensional Laplace Green's function $\mathcal{G}(\bm{x}, \bm{y}) = (4\pi)^{-1} \norm{\bm{x}-\bm{y}}^{-1}$  (or \emph{Laplace kernel}). The approximation to~\eqref{eqn:pt-pot} produced by the FMM comes with a number of guarantees, including $O(N+M)$ evaluation complexity and mathematically rigorous error bounds. Recent work in fast algorithms for integral equation methods~\cite{rachh:2017:qbx-fmm,wala:2018:gigaqbx2d,wala:2019:gigaqbx3d} uses a modified FMM to evaluate a related but distinct potential, namely the \emph{local expansion} of~\eqref{eqn:pt-pot}. The local expansion is a modified potential that arises after separation of variables via the addition theorem for the Laplace kernel (cf.~\eqref{eqn:laplace-addition-thm-solid}) and takes the form
\begin{equation}
  \label{eqn:locexp}
  \local{\bm{c}_j}{p}[\Phi](\bm{t}_j)
  =
  \sum_{n=0}^p \sum_{m=-n}^n
  L_n^m \norm{\bm{t}_j - \bm{c}_j}^n
  Y_n^m(\widehat{\bm{t}_j - \bm{c}_j}),
  \quad
  j = 1, \ldots, M,
\end{equation}
where $\{\bm{c}_j\} \subset \mathbb{R}^3$ is a set of expansion centers, $\{Y_n^m\}$ are spherical harmonics, the notation $\bm{\hat{v}}$ refers to the normalized vector $\bm{v}$, and the (center-dependent) \emph{local coefficients} $\{L_n^m\}$ are defined as
\[
  L_n^m = \frac{1}{2n+1} \sum_{i=1}^N w_i
  \frac{\overline{Y_n^m(\widehat{\bm{s}_i - \bm{c}_j})}}{\norm{\bm{s}_j - \bm{c}_j}^{n+1}}.
\]
Standard FMM error estimates do not immediately apply to the approximation of~\eqref{eqn:locexp}. The aim of the present paper is to give bounds for the approximation of~\eqref{eqn:locexp} via the FMM, by generalizing the estimates of Greengard--Rokhlin for three-dimensional point potentials~\cite{greengard:1988:thesis} to the case of local expansions. Specifically, we provide proofs for a set of hypotheses first demonstrated numerically in~\cite{wala:2019:gigaqbx3d}.

One context where the evaluation of local expansions such as~\eqref{eqn:locexp} arises is in integral equation methods for the solution of boundary value problems of elliptic constant-coefficient partial differential equations (PDEs). A key representation of solutions to PDEs in integral equation methods is a layer potential, such as the single-layer potential,
an integral operator
\begin{equation}
  \label{eqn:slp}
  \mathcal{S}\sigma(\bm{x}) = \int_{\Gamma} \mathcal{G}(\bm{x}, \bm{y}) \sigma(\bm{y}) \, dS(\bm{y}), \quad \bm{x} \in \mathbb{R}^3,
\end{equation}
defined over a bounded surface $\Gamma \subseteq \mathbb{R}^3$, with a density function $\sigma: \Gamma \to \mathbb{C}$. Effective numerical realization of integral equation methods requires addressing two interlocking concerns: \emph{accurate quadrature} for on-surface evaluation of layer potentials and \emph{acceleration} to reduce the cost of evaluating discretized layer potentials. When discretized (e.g.) using a panel-based `smooth' composite quadrature rule, the quadrature approximation from such a discretization is known to be inaccurate when $\bm{x}$ is close to or on the surface, due to the singularity of the Laplace kernel on its diagonal. Therefore, discretization of operators such as $\mathcal{S}$ requires a corrected quadrature that can be applied rapidly at a potentially large number of on-surface targets.

A general technique for accurate on-surface/near-surface layer potential evaluation is Quadrature by Expansion (QBX,~\cite{klockner:2013:qbx}). QBX is based on the observation that the potential admits a smooth high-order approximation via local expansions that can be treated via standard quadrature. It is applied as follows. Consider a generic quadrature discretization of the single-layer potential
\begin{equation}
  \label{eqn:slp-quad}
  \quadr [\mathcal{S} \sigma] (\bm{x})
  = \sum_{i=1}^N w_i \mathcal{G}(\bm{x}, \bm{y}_i) \sigma(\bm{y}_i)
\end{equation}
where the weights $\{w_i\}$ arise from a smooth quadrature rule for surfaces. If the target $\bm{x}$ is on the surface or is near $\Gamma$, then the above approximation may not be accurate due to the near-singularity of the integrand. To correct this, an off-surface expansion center $\bm{c} \in \mathbb{R}^3 \setminus \Gamma$ close to $\bm{x}$ is chosen. A truncated local expansion~\eqref{eqn:locexp} is then formed which stands in for the original potential.
Intuitively, this local expansion is a smoother function than the original potential at the target, and hence is easier for quadrature to handle. In combination with a suitable smooth quadrature rule, QBX recovers a high-order approximation to $\mathcal{S} \sigma$. For more details, see~\cite{klockner:2013:qbx, epstein:2013:qbx-error-est}.

The primary research direction in the acceleration of QBX has been to find appropriate modifications to the FMM to permit for the evaluation of QBX local expansions, despite the fact that the local expansion~\eqref{eqn:locexp} is not quite a point potential~\eqref{eqn:pt-pot}.  Part of what makes the task nontrivial, as previously mentioned, is that standard FMM error estimates do not apply when the local expansion is being approximated by the FMM\@. Thus, a major concern in accelerated QBX schemes is that QBX expansions are formed accurately.

The GIGAQBX~FMM~\cite{wala:2018:gigaqbx2d,wala:2019:gigaqbx3d,wala:2020:gigaqbxts} is a recently developed technique for FMM-accelerated evaluation of QBX expansions. Compared with other FMMs, GIGAQBX differs by treating local expansions essentially as ball-shaped `targets with extent,' suspending them at non-leaf levels of the tree where they protrude beyond their bounding box by a defined amount, and modifying the appropriate FMM interaction when necessary to guarantee the accuracy of FMM evaluation.

The subject of this paper is closely tied to the error analysis of GIGAQBX\@.
We refer to the error studied in this paper as \emph{acceleration error} to distinguish the error in the fast algorithm from other sources of error in layer potential evaluation. In a conventional point FMM, acceleration error is the difference between the point potential and the approximation formed by the FMM\@. In GIGAQBX, acceleration error is the difference between the local expansion of a point potential~\eqref{eqn:locexp} and the local expansion of its approximation formed by the fast algorithm, as measured by point evaluations of the expansions within a certain distance from the center. Straightforwardly, this approximate potential equals the local expansion of an approximate point potential. In other words, to model the acceleration error $e_{\textrm{accel}}$ of GIGAQBX in evaluating the potential $\Phi$, one can regard the output of GIGAQBX as an exact local expansion of an approximate point potential $\mathcal{A}^p_{\textrm{FMM}}[\Phi]$, where $p$ is an intermediate expansion order:
\begin{equation}
    \label{eqn:accel-error}
    e_{\textrm{accel}}=
    \left|
    \local{\bm{c}}{q}[\Phi](\bm{x})
    -
    \local{\bm{c}}{q}[\mathcal{A}^p_{\textrm{FMM}}[\Phi]](\bm{x})
    \right|,
\end{equation}
and where $\bm{c}$ is the expansion center associated to a target $\bm{x}$.

In the two-dimensional case, analytical acceleration error bounds for GIGAQBX were presented in~\cite{wala:2018:gigaqbx2d}. For a unit strength point source, these bounds imply an error of the form $O(q(1/2)^{p+1})$ where $p$ is the intermediate expansion order and $q$ is the QBX order. The techniques in that paper do not extend to three dimensions. The paper~\cite{wala:2019:gigaqbx3d} introduced the three-dimensional GIGAQBX FMM, producing the above-mentioned series of numerical hypotheses which imply an error bound of the form $O((3/4)^{p+1})$. For a rough comparison, when using a near-neighborhood size of one box width, the point version of the FMM achieves acceleration error of $O((1/2)^{p+1})$ and $O((3/4)^{p+1})$ in two and three dimensions, respectively~\cite{petersen:1995:fmm-error-est,petersen:1995:fmm-error-est-3d}.

Related work on accelerated QBX has handled acceleration error as follows. The QBX FMM by Rachh et al.~\cite{rachh:2017:qbx-fmm,rachh2015integral} is the first published QBX--FMM coupling and the first to note the nontrivial nature of accurately forming QBX local expansions with the FMM\@. This scheme controls acceleration error by increasing the intermediate FMM order until error tolerance is achieved, an empirically effective procedure not backed by mathematical bounds. The QBKIX scheme~\cite{rahimian:2018:qbkix} does not exhibit the same kind of acceleration error as the QBX~FMM of~\cite{rachh:2017:qbx-fmm} or GIGAQBX, as it uses an entirely different expansion formation mechanism based on the kernel-independent FMM~\cite{ying:kifmm:2004}. However, this technique incurs errors due to aliasing of the expansion coefficients and extrapolation of the resulting expansions. In the local QBX scheme of Siegel and Tornberg~\cite{siegel:2018}, the QBX-mediated near-field is evaluated directly, and the only acceleration error is due to an FMM-mediated point potential.

In summary of the discussion above:
\begin{itemize}
    \item In this paper we prove Hypotheses 1--3 of~\cite{wala:2019:gigaqbx3d}, which enable a high-order error estimate for GIGAQBX in three dimensions.
    \item We show how the error estimates for translation operators of Greengard--Rokhlin are a special case of the above hypotheses.
\end{itemize}
Additionally, we make two further contributions in this paper:
\begin{itemize}
    \item We present an alternative bound for the FMM approximation of local expansions that is based on approximation theory. This bound, though not sharp, holds true for a wide variety of translation operators and kernels.
    \item We validate the usefulness of these bounds by comparison with prior numerical evidence.
\end{itemize}


\section{Background}%
\label{sec:background}

\subsection{Notation}

For a vector $\bm{x} \in \mathbb{R}^3 \setminus \{\bm{0}\}$,
the notation $\bm{\hat{x}}$ refers to the unit
vector in the direction of $\bm{x}$, i.e.,
$
  \bm{\hat{x}} ={\bm{x}}/{\norm{\bm{x}}},
$
where $\norm{\cdot}$ denotes the Euclidean norm.
For a point source $\bm{s} \in \mathbb{R}^3$, the unit-strength potential due to $\bm{s}$ is denoted
$
  \phi_{\bm{s}}(\bm{x}) = {1}/{\norm{\bm{x} - \bm{s}}}.
$
The open Euclidean ball of radius $r \geq 0$ centered at $\bm{c} \in \mathbb{R}^3$ is the set $B(\bm{c}, r) = \{ \bm{x} \in \mathbb{R}^3 : \norm{\bm{x} - \bm{c}} < r \}$.
The closed ball of radius $r$ for the same center is denoted $\bar{B}(\bm{c}, r) = \{ \bm{x} \in \mathbb{R}^3 : \norm{\bm{x} - \bm{c}} \leq r \}$.
The integer interval of radius $r \in \mathbb{N}_0$ centered at $c \in \mathbb{Z} $ is the set $I(c, r) = \{ c-r, c-r+1, \ldots, c+r\}$.
The unit sphere $\{\bm{x} \in \mathbb{R}^3 : \norm{\bm{x}} = 1\}$ is denoted~$\mathbb{S}^2$.

\subsection{Spherical harmonics and Fourier--Laplace series}

A polynomial~$p: \mathbb{R}^3 \to \mathbb{C}$ of degree $n$ is \emph{homogeneous} if it satisfies $p(\lambda \bm{x}) = \lambda^n p(\bm{x})$ for
all $\bm{x} \in \mathbb{R}^3$ and $\lambda \in \mathbb{C}$. The (complex) vector space $\mathbb{Y}_n^3$ (\emph{spherical harmonic space of degree $n$}, cf.~\cite[Def.~2.7]{atkinson:2012:sph-harm})
is the space consisting of the harmonic,
homogeneous polynomials of degree $n$, with domains restricted to the unit sphere.
The \emph{spherical harmonics of degree $n$}, $\{ Y_{n}^{m}: \mathbb{S}^2 \to \mathbb{C} \mid m \in \mathbb{Z}, -n \leq m \leq n \}$, are a basis for $\mathbb{Y}_n^3$, and they are defined (based on~\cite[eq.~(14.30.1)]{nist:dlmf}) as follows, where $0 \leq \theta \leq \pi$ and $0 \leq \phi \leq 2\pi$ are angles:
\[
Y_{n}^m(\bm{\xi}) = \sqrt{\frac{2n+1}{4\pi} \frac{(n-m)!}{(n+m)!}}
e^{i m \phi} P_n^m(\cos(\theta)), \quad \bm{\xi} = \transpose{(\sin \theta \cos \phi, \sin
\theta \sin \phi, \cos \theta)}.
\]
The function $P_n^m$ above is the associated Legendre function~\cite[eq.~(14.3.1)]{nist:dlmf} of degree $n$ and order $m$. (This choice of orthonormal basis for spherical harmonics differs from others in the FMM literature, e.g., \cite{greengard:1988:thesis,wala:2019:gigaqbx3d,siegel:2018}. However, our main results are independent of basis.) The set of \emph{spherical harmonics} is the set $\bigcup_{n=0}^\infty \{Y_n^m \mid -n \leq m \leq n\}$.

A key property of the spherical harmonics is that they form an
orthonormal basis of $L^2(\mathbb{S}^2)$, with the inner product given by $\innerprod{f}{g} = \int_{\mathbb{S}^2} f \overline{g} \, dS$.
For $f \in L^2(\mathbb{S}^2)$,
let $\proj{n}[f]$ be the orthogonal projection of $f$ onto $\mathbb{Y}_n^3$.
The \emph{Fourier--Laplace series} is an orthonormal expansion defined as
\[
\fourier{p}[f](\bm{\xi}) = \sum_{n=0}^p \proj{n}[f](\bm{\xi})=
\sum_{n=0}^p \sum_{m=-n}^n \innerprod{f}{Y_n^m} Y_n^m(\bm{\xi}),
\quad \bm{\xi} \in \mathbb{S}^2.
\]
While it is possible to extend the definition of a Fourier--Laplace series to functions defined on spheres of any radius, in this paper when we speak of Fourier--Laplace series it shall be exclusively for functions defined on the unit sphere.

We will refer to the formulas below to simplify computations with the spherical harmonics. First, the pointwise values of the spherical harmonics along the
positive $z$-axis are particularly simple. These are given by (cf.~\cite[eq.~(14.30.4)]{nist:dlmf})
\begin{equation}
\label{eqn:sph-harm-z-axis}
Y_n^m(\transpose{(0, 0, 1)}) = \begin{dcases}
\sqrt{\frac{2n+1}{4\pi}}, & m = 0, \\
0, & m \neq 0.
\end{dcases}
\end{equation}
Second, a key identity for spherical harmonics is the addition theorem,
which states~\cite[eq.~(14.30.9)]{nist:dlmf}
\begin{equation}
  \label{eqn:sph-harm-addition-thm}
  P_n(\bm{\xi} \cdot \bm{\eta})
  = \frac{4\pi}{2n+1} \sum_{m=-n}^{n} Y_n^m(\bm{\xi})
  \overline{Y_n^m(\bm{\eta})},
  \quad \bm{\xi}, \bm{\eta} \in \mathbb{S}^2.
\end{equation}
The function $P_n$ is the Legendre polynomial of degree $n$.
In particular, this last identity implies
\begin{equation}
  \label{eqn:sph-harm-addition-corollary}
  \frac{2n+1}{4\pi} = \sum_{m=-n}^{n} \norm{Y_n^m(\bm{\xi})}^2,
  \quad \bm{\xi} \in \mathbb{S}^2.
\end{equation}
See~\cite{atkinson:2012:sph-harm,axler:harmonic-functions:2001,dai:2013:approximation} for further details concerning spherical harmonics.

\subsection{Solid harmonics}

The functions $R_n^m: \mathbb{R}^3 \to \mathbb{C}$ and $I_n^m:
\mathbb{R}^3 \setminus \{\bm{0}\} \to \mathbb{C}$ defined by
\begin{align*}
  R_n^m(\bm{x}) &= \norm{\bm{x}}^n Y_n^m(\bm{\hat{x}}), \\
  I_n^m(\bm{x}) &= \norm{\bm{x}}^{-(n+1)} Y_n^m(\bm{\hat{x}}), \quad \bm{x} \neq \bm{0},
\end{align*}
are respectively called the \emph{regular} and \emph{irregular} \emph{solid harmonics}, or \emph{solid harmonics} for short. (Our definition normalizes them such that $\int_{\mathbb{S}^2} \norm{R_n^m}^2 dS = \int_{\mathbb{S}^2} \norm{I_n^m}^2 dS = 1$.) The definition of $R_n^m$ is to be understood so that $R_n^m(\bm{0}) = 0$ for $n > 0$ and $R_0^0(\bm{x}) = 1/\sqrt{4\pi}$.
These functions are
solutions to Laplace's equation and play a key role in expanding Laplace potentials. In particular, the addition theorem for the Laplace kernel states~\cite[eq.~(5.10)]{beatson:greengard:1997} that, for $\bm{x}, \bm{y} \in
\mathbb{R}^3$, $\norm{\bm{x}} < \norm{\bm{y}}$,
\begin{equation}
  \label{eqn:laplace-addition-thm}
   \frac{1}{\norm{\bm{x} - \bm{y}}}
   =
   \sum_{n=0}^\infty \norm{\bm{x}}^n
   \norm{\bm{y}}^{-(n+1)} P_{n}(\bm{\hat{x}} \cdot \bm{\hat{y}}).
\end{equation}
Using~\eqref{eqn:sph-harm-addition-thm}, we write this as
\begin{equation}
  \label{eqn:laplace-addition-thm-solid}
   \frac{1}{\norm{\bm{x} - \bm{y}}}
   =
   \sum_{n=0}^\infty
   \frac{4\pi}{2n+1}
   \sum_{m=-n}^n
   R_n^m(\bm{x}) \overline{I_n^{m}(\bm{y})}.
\end{equation}

We next give addition theorems for expressing shifted
solid harmonics $R_n^m(\bm{x} + \bm{y})$ and $I_n^m(\bm{x} +
\bm{y})$ in a series of unshifted solid harmonic functions of $\bm{x}$. Define a normalization constant
\[
A_n^m = \sqrt{\frac{2n+1}{4\pi} \frac{(n-m)!}{(n+m)!}}.
\]
Then for all $\bm{x}, \bm{y} \in \mathbb{R}^3$
(cf.~\cite[eq.~(7)]{caola:1978:solid-harmonics})
\begin{equation}
\label{eqn:rnm-addition-thm}
R_n^m(\bm{x} + \bm{y}) =
A_n^m \sum_{\nu=0}^n
\sum_{\ilocloc{\mu}{\nu}{m}{n-\nu}}
\binom{n+m}{\nu+\mu}
\frac{R_{n-\nu}^{m-\mu}(\bm{y})
R_{\nu}^{\mu}(\bm{x})} { A_{n-\nu}^{m-\mu} A_{\nu}^{\mu}}.
\end{equation}
Similarly, for $\bm{x}, \bm{y} \in \mathbb{R}^3$ with $\norm{\bm{x}} < \norm{\bm{y}}$
(cf.~\cite[eq.~(10)]{caola:1978:solid-harmonics})
\begin{equation}
\label{eqn:inm-addition-thm}
I_n^m(\bm{x} + \bm{y}) =
A_n^m \sum_{\nu=0}^\infty
\sum_{\mu \in I(m, \nu)}
(-1)^{\nu-m-\mu}
\binom{n+\nu-\mu}{n-m}
\frac{I^{\mu}_{n+\nu}(\bm{y})
R_{\nu}^{m-\mu}(\bm{x})} {A_{n+\nu}^{\mu} A_{\nu}^{m-\mu}}.
\end{equation}

\subsection{Local and multipole expansions}



Local and multipole expansions that may be familiar from the FMM are a type of series based on the addition theorem for the Laplace kernel or the addition theorems for solid harmonic functions. In this paper, we will sometimes prefer to regard local and multipole expansions as (integral) \emph{operators} $\local{\bm{c}}{p}[\cdot]$ and $\mpole{\bm{c}}{p}[\cdot]$, acting on functions that solve a Dirichlet boundary value problem of the Laplace equation, yielding a series representing other such functions that, after a change of domain, is a Fourier--Laplace series.
The series that result in both cases---via addition theorems or integrals---are equivalent (cf.~Remark~\ref{rem:fl} below), but the chief advantage of defining expansions via integral operators is that it makes certain analytical properties conveniently apparent. In this section, we recall the details of the operator definition.

\subsubsection{Local expansions}

We show how the local expansion operator may be defined starting with the Poisson integral identity for the unit ball. The \emph{Poisson kernel} for the three-dimensional unit ball~\cite[eq.~(2.127)]{atkinson:2012:sph-harm}, is given by
\begin{equation}
  \label{eqn:poisson-kernel}
  \mathcal{P}(r, t) = \frac{1}{4\pi}  \frac{1-r^2}{(1+r^2-2rt)^{3/2}}, \quad r \in (-1, 1), \enskip t \in [-1, 1],
\end{equation}
and it also~\cite[Prop.~2.28]{atkinson:2012:sph-harm} has the Legendre series expansion
\begin{equation}
  \label{eqn:poisson-kernel-legendre} \mathcal{P}(r, t) = \frac{1}{4\pi} \sum_{n=0}^\infty (2n+1) r^n P_n(t), \quad
  r \in (-1, 1), \enskip t \in [-1, 1].
\end{equation}
Let $g: \bar{B}(\bm{0}, 1) \to \mathbb{R}$ be a function that is harmonic inside the unit ball and continuous on the closed unit ball. (The assumption of continuity on the boundary can be relaxed, though it is more than sufficient for purposes of the functions we use in this paper.) The Poisson integral identity~\cite[Thm.~2.2.5 (adjusted for normalization)]{dai:2013:approximation} states:
\[
  g(\bm{x}) = \int_{\mathbb{S}^{2}} \mathcal{P} \left( \norm{\bm{x}}, \bm{\hat{x}} \cdot \bm{\xi} \right) g(\bm{\xi}) \, dS(\bm{\xi}),
  \quad \norm{\bm{x}} < 1.
\]
Suppose that $f$ is harmonic inside a ball in of radius $\rho > 0$ centered at $\bm{c} \in \mathbb{R}^3$ and is continuous on the closure of the same ball. A Poisson integral representation of $f$ may be obtained by considering the function with a translated and scaled domain
\begin{equation}
    \label{eqn:translated-scaled-function}
    \mathcal{T}_{\bm{c},\rho}[f](\bm{y}) = f(\bm{c} + \rho\bm{y}), \quad \norm{\bm{y}} \leq 1.
\end{equation}
Observe the function $\mathcal{T}_{\bm{c},\rho}[f]$ is harmonic inside the unit ball.
For $\bm{x} \in B(\bm{c}, \rho)$,
we may represent $f(\bm{x})$ using
\begin{equation}
  \label{eqn:poisson-formula-generic-ball}
  f(\bm{x}) = \mathcal{T}_{\bm{c},\rho}[f]\left(\frac{\bm{x}-\bm{c}}{\rho}\right) = \int_{\mathbb{S}^{2}}
  \mathcal{P}\left(\frac{\norm{\bm{x}-\bm{c}}}{\rho}, (\widehat{\bm{x}-\bm{c}}) \cdot \bm{\xi} \right)
  f(\bm{c} + \rho \bm{\xi}) \, dS(\bm{\xi}).
\end{equation}
We substitute the Legendre series expansion of the Poisson kernel~\eqref{eqn:poisson-kernel-legendre}
into the above formula and interchange the order of integration and summation (which is permitted due to uniform convergence of the series for $\mathcal{P}(r, \cdot)$ for a fixed radius $r$), so that
\begin{equation}
  \label{eqn:poisson-integral-partial-expansion}
  f(\bm{x}) =
  \sum_{n=0}^\infty
\frac{\norm{\bm{x}-\bm{c}}^n}{\rho^n}
  \left(\frac{2n+1}{4\pi}\right)
  \int_{\mathbb{S}^{2}}
  P_n\left((\widehat{\bm{x} - \bm{c}}) \cdot \bm{\xi}\right)
  f(\bm{c} + \rho \bm{\xi}) \, dS(\bm{\xi}).
\end{equation}
Next, we apply the spherical harmonic addition theorem~\eqref{eqn:sph-harm-addition-thm} to expand the Legendre polynomial terms, obtaining
\[
  f(\bm{x}) = \sum_{n=0}^\infty
  \frac{\norm{\bm{x}-\bm{c}}^n}{\rho^n}
  \int_{\mathbb{S}^{2}}
  \sum_{m=-n}^{n}
  Y_{n}^{m}\left(\widehat{\bm{x}-\bm{c}}\right)
  \overline{Y_{n}^{m}(\bm{\xi})}
  f(\bm{c} + \rho \bm{\xi}) \, dS(\bm{\xi}).
\]
Defining \emph{local coefficients} via
\[
  L_{n}^{m} = \frac{1}{\rho^n} \int_{\mathbb{S}^{2}} f(\bm{c} + \rho \bm{\xi}) \overline{Y_{n}^{m}(\bm{\xi})} \, dS(\bm{\xi}),
\]
we call the \emph{local expansion} of $f$ the series representation
\begin{equation}
  \label{eqn:convergent-local-expansion}
  f(\bm{x}) = \sum_{n=0}^{\infty}
  \sum_{m=-n}^{n}
  L_{n}^{m}
  \norm{\bm{x} - \bm{c}}^n
  Y_{n}^{m}\left(\widehat{\bm{x}-\bm{c}}\right)
  =
  \sum_{n=0}^\infty
  \sum_{m=-n}^{n}
  L_{n}^{m}
  R_n^m (\bm{x} - \bm{c}).
\end{equation}
The next definition summarizes this construction.
\begin{definition}[Local expansion]%
\label{def:local}%
Let $\rho > 0$ and $\bm{c} \in \mathbb{R}^3$.  Let $f :
  \bar{B}(\bm{c}, \rho) \to \mathbb{C}$ be harmonic inside $B(\bm{c}, \rho)$
  and continuous on $\bar{B}(\bm{c}, \rho)$. Define the local coefficients of $f$ via the integrals
  \begin{equation}
  \label{eqn:local-coeff}
     L_{n}^{m} = \frac{1}{\rho^n} \int_{\mathbb{S}^{2}} f(\bm{c} + \bm{\xi} \rho) \overline{Y_n^m(\bm \xi)} dS(\bm \xi).
  \end{equation}
  The function
  \begin{equation}
    \label{eqn:local-exp}
    \local{\bm{c}}{p}[f](\bm{x})
    = \sum_{n=0}^p
    \sum_{m=-n}^{n}
    L_{n}^{m}
    R_n^m(\bm{x} - \bm{c}),
    \quad \bm{x} \in \bar{B}(\bm{c}, \rho),
  \end{equation}
  is called a \emph{$p$-th order local expansion of $f$ centered at $\bm{c}$}.
\end{definition}
\begin{remark}[Connection to Fourier--Laplace series]%
\label{rem:fl}%
The local expansion of a function $f: \bar{B}(\bm{c}, \rho) \to \mathbb{C}$ satisfying the hypotheses of Definition~\ref{def:local} is closely connected to a Fourier--Laplace series:
\begin{equation}
\label{eqn:fourier-laplace-local-connection}
\local{\bm{c}}{p}[f](\bm{x}) = \fourier{p}
\left[\mathcal{T}_{\bm{c},r}
\left[
\left.
f
\right|_{\{\bm{y}: \norm{\bm{y}-\bm{c}} = r\}}
\right]
\right]
(\widehat{\bm{x}-\bm{c}}),
\quad
\norm{\bm{x}-\bm{c}} = r,
\end{equation}
where $\mathcal{T}_{\bm{c},r}[\cdot]$ is defined in~\eqref{eqn:translated-scaled-function}.
In other words, the local expansion of $f$, restricted to a sphere of fixed radius about the expansion center, coincides with the Fourier--Laplace series of $f$ after a scaling and translation of the domain to the unit sphere. The Fourier--Laplace coefficients for the series corresponding to~\eqref{eqn:fourier-laplace-local-connection} are $\{r^n L_n^m\}$.
\end{remark}

It follows from Remark~\ref{rem:fl} that the local coefficients $\{L_{n}^{m}\}$ are the unique coefficients for which the representation~\eqref{eqn:convergent-local-expansion} holds for all $\bm{x} \in B(\bm{c}, \rho)$. For if $\{{(L')}_{n}^{m}\}$ is a second set of local coefficients for $f$ on $B(\bm{c}, \rho)$, then $\{ \norm{\bm{x}-\bm{c}}^n (L_n^m - (L')_n^m)\}$ are Fourier--Laplace coefficients for the zero function (cf.~\eqref{eqn:fourier-laplace-local-connection}), which implies that $L_{n}^{m}=(L')_{n}^{m}$ for all $n$ and $m$. Uniqueness of the local coefficients implies that the local expansion of a potential obtained from applying the addition theorem for the Laplace kernel~\eqref{eqn:laplace-addition-thm} or solid harmonics~(\ref{eqn:rnm-addition-thm},~\ref{eqn:inm-addition-thm}) is the same as the local expansion defined in this section.

Two key analytical properties follow from the integral form of the local expansion. We shall make use of these properties later to justify interchanging local expansions and series.
\begin{lemma}[Uniform convergence of local expansions]%
\label{lem:locexp-uniform-convergence}%
The local expansion $\local{\bm{c}}{p}[f]: \bar{B}(\bm{c}, \rho) \to \mathbb{C}$ converges uniformly to $f$ as $p \to \infty$ on $\bar{B}(\bm{c}, r)$, for any $0 \leq r < \rho$.
\end{lemma}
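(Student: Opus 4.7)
The plan is to exploit Remark~\ref{rem:fl} implicitly: rather than invoking convergence theorems for Fourier--Laplace series directly, I would retrace the derivation of the local expansion starting from the Poisson integral formula~\eqref{eqn:poisson-formula-generic-ball}, and turn the interchange of sum and integral used there into a quantitative tail estimate. The single key quantitative fact is that the Legendre series~\eqref{eqn:poisson-kernel-legendre} for $\mathcal{P}(s,t)$ converges uniformly on $\{(s,t): |s|\le \alpha,\ |t|\le 1\}$ for any fixed $\alpha\in[0,1)$, because $|P_n(t)|\le 1$ on $[-1,1]$ and $\sum_{n=0}^\infty (2n+1)\alpha^n<\infty$.

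The steps, in order. First, fix $r$ with $0\le r<\rho$ and set $\alpha=r/\rho<1$. For $\bm{x}\in\bar B(\bm c,r)$ we have $\norm{\bm x-\bm c}/\rho\le \alpha$, so substituting~\eqref{eqn:poisson-kernel-legendre} into~\eqref{eqn:poisson-formula-generic-ball} and truncating at level $p$ produces, by the same interchange of sum and integral and the same application of~\eqref{eqn:sph-harm-addition-thm} used in the derivation of~\eqref{eqn:poisson-integral-partial-expansion} and the formula immediately after it, exactly $\local{\bm c}{p}[f](\bm x)$. Second, use continuity of $f$ on the compact set $\bar B(\bm c,\rho)$ to set $M=\sup_{\bar B(\bm c,\rho)}|f|<\infty$. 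Third, estimate the tail: for every $\bm x\in\bar B(\bm c,r)$,
\[
\bignorm{f(\bm x)-\local{\bm c}{p}[f](\bm x)}
\le \int_{\mathbb S^2}\left|\sum_{n=p+1}^\infty \frac{2n+1}{4\pi}\!\left(\frac{\norm{\bm x-\bm c}}{\rho}\right)^{\!n}\!\!P_n\bigl((\widehat{\bm x-\bm c})\cdot\bm\xi\bigr)\right||f(\bm c+\rho\bm\xi)|\,dS(\bm\xi).
\]
Using $|P_n(t)|\le 1$, $\norm{\bm x-\bm c}/\rho\le\alpha$, and $|f|\le M$, this is bounded uniformly in $\bm x$ by $M\sum_{n=p+1}^\infty(2n+1)\alpha^n$, which is the tail of a convergent series and therefore tends to $0$ as $p\to\infty$.

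The main potential obstacle is justifying the passage from the truncated Poisson--kernel integral to the finite local expansion $\local{\bm c}{p}[f]$; but since each truncation is a \emph{finite} sum, the interchange of sum and integral is automatic, and the spherical-harmonic addition theorem~\eqref{eqn:sph-harm-addition-thm} converts each Legendre term into the corresponding pair of solid-harmonic factors precisely matching Definition~\ref{def:local}. Everything else is straightforward majorization by a convergent geometric-type series, yielding uniform convergence on $\bar B(\bm c,r)$ as required.
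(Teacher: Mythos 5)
Your argument is correct and follows essentially the same route as the paper's proof: both derive the term/tail bound $(2n+1)(r/\rho)^n\bignorm{f}_\infty$ from the Legendre expansion of the Poisson kernel inside the Poisson integral representation, and conclude uniform convergence on $\bar{B}(\bm{c},r)$ from summability of $\sum(2n+1)\alpha^n$ for $\alpha=r/\rho<1$. Your version simply spells out the details that the paper's two-sentence proof leaves implicit.
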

\begin{proof}
From the Poisson integral formula, the expansion must converge to $f$ in the interior.
An integral estimate on the $n$-th term of the local expansion~\eqref{eqn:poisson-integral-partial-expansion} bounds this term from above by $\left(2n+1\right)\left(r/\rho\right)^n \bignorm{f|_{\bar{B}(\bm{c},\rho)}}_\infty$,
which implies the series converges uniformly. Note that there are generally no convergence guarantees for the boundary in the uniform norm.
\end{proof}

\begin{lemma}[Local expansions of uniformly convergent sequences of functions]%
\label{lem:locexp-uniform-limit}%
If $\{f_n: \bar{B}(\bm{c}, \rho) \to \mathbb{C} \mid n \in \mathbb{N}_0\}$ is a sequence of functions harmonic in $B(\bm{c}, \rho)$, continuous on $\bar{B}(\bm{c}, \rho)$, converging uniformly to a function $f: \bar{B}(\bm{c}, \rho) \to \mathbb{C}$ harmonic in $B(\bm{c}, \rho)$, then, fixing $p \in \mathbb{N}_0$, we have $\local{\bm{c}}{p}[f](\bm{x}) = \lim_{n \to \infty} \local{\bm{c}}{p}[f_n](\bm{x})$ for all $\bm{x} \in \bar{B}(\bm{c}, \rho)$.
\end{lemma}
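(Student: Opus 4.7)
The plan is to reduce the statement to convergence of the local coefficients, and then to exploit that the local expansion at order $p$ is a \emph{finite} linear combination of these coefficients. Concretely, for each $n$ and $m$, the local coefficient is given by the integral formula~\eqref{eqn:local-coeff}, so I would first compute
\[
L_n^m[f] - L_n^m[f_k] = \frac{1}{\rho^n}\int_{\mathbb{S}^2}\bigl(f(\bm{c}+\rho\bm{\xi}) - f_k(\bm{c}+\rho\bm{\xi})\bigr)\overline{Y_n^m(\bm{\xi})}\,dS(\bm{\xi}).
\]
Uniform convergence of $f_k \to f$ on $\bar{B}(\bm{c},\rho)$ in particular implies uniform convergence on the sphere $\{\bm{c}+\rho\bm{\xi} : \bm{\xi} \in \mathbb{S}^2\}$, so the integrand converges uniformly to zero. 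Since $Y_n^m$ is a fixed $L^2(\mathbb{S}^2)$ function and $\mathbb{S}^2$ has finite measure, a Cauchy--Schwarz estimate
\[
\bigl|L_n^m[f] - L_n^m[f_k]\bigr| \le \frac{1}{\rho^n}\bigl\|f - f_k\bigr\|_{\infty,\bar{B}(\bm{c},\rho)} \cdot \sqrt{4\pi}
\]
(using $\|Y_n^m\|_{L^2(\mathbb{S}^2)}=1$ and $|\mathbb{S}^2|=4\pi$) gives $L_n^m[f_k] \to L_n^m[f]$ as $k\to\infty$ for each fixed pair $(n,m)$.

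Next I would conclude by writing the difference of the two truncated expansions at an arbitrary point $\bm{x} \in \bar{B}(\bm{c},\rho)$:
\[
\local{\bm{c}}{p}[f](\bm{x}) - \local{\bm{c}}{p}[f_k](\bm{x})
= \sum_{n=0}^{p}\sum_{m=-n}^{n}\bigl(L_n^m[f] - L_n^m[f_k]\bigr) R_n^m(\bm{x}-\bm{c}).
\]
Because this is a finite sum of $(p+1)^2$ terms, and each coefficient difference tends to zero by the previous step while $R_n^m(\bm{x}-\bm{c})$ is a fixed (finite) number, the entire expression tends to zero as $k\to\infty$. This gives the pointwise convergence asserted in the lemma.

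There is no real obstacle here: the lemma is essentially a ``continuity of the local expansion operator in the uniform topology'' statement, and both ingredients (passing a uniform limit through a bounded integral on $\mathbb{S}^2$, and the finiteness of the expansion sum) are elementary. The only subtlety worth noting is that uniform convergence on the closed ball is what is needed to justify swapping limit and integral on the boundary sphere; pointwise convergence on the interior alone would not suffice, which is why the hypothesis is stated in this strengthened form. One could, for an even cleaner writeup, phrase the argument as an application of dominated convergence with dominating function $\sup_k \|f_k\|_{\infty,\bar{B}(\bm{c},\rho)}\cdot |Y_n^m(\bm{\xi})|/\rho^n$, which is finite since the uniformly convergent sequence $\{f_k\}$ is uniformly bounded.
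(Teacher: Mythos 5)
Your proposal is correct and is essentially the paper's own argument made explicit: the paper views $\local{\bm{c}}{p}[\cdot]$ as an integral operator with a continuous kernel and interchanges integration with the uniform limit, which is exactly what your coefficient-by-coefficient Cauchy--Schwarz estimate followed by the finite sum over $(n,m)$ accomplishes. No gaps.
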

\begin{proof}
The local expansion $\local{\bm{c}}{p}[\cdot]$ may be written as an integral operator with a continuous kernel (cf.~\eqref{eqn:local-coeff}). After a change of variable, we may take this integral to be over the sphere of radius $\rho$ centered at $\bm{c}$. The result follows as we may interchange integration and uniform limits.
\end{proof}

\subsubsection{Multipole expansions}
The multipole expansion of a function may also be defined based on the Poisson integral identity through a geometric inversion. Let $g: \mathbb{R}^3 \setminus B(\bm{0}, 1) \to \mathbb{C}$ be harmonic in the exterior of the unit ball and continuous on $\mathbb{R}^3 \setminus B(\bm{0}, 1)$. Assume $\norm{g(\bm{x})} \to 0$ as $\bm{x} \to \infty$. For $\bm{x} \in \mathbb{R}^3 \setminus \{\bm{0}\}$, define
\[
\bm{x}^* = \left.\bm{\hat{x}}\middle/{\norm{\bm{x}}}\right..
\]
The map $\bm{x} \mapsto \bm{x}^*$ is an inversion of $\bm{x}$ with respect to the unit sphere, mapping the interior component of the unit sphere minus the origin to the exterior component in a one-to-one fashion, and vice versa.
It can be shown~\cite[Ch.~4]{axler:harmonic-functions:2001} that the inverted function
\[
g^*(\bm{x}^*) = \norm{\bm{x}^*}^{-1} g(\bm{x}), \quad 0 < \norm{\bm{x}^*} \leq 1,
\]
with the removal of a removable singularity at the origin, extends to a function that is harmonic in the unit ball.

If $f$ is harmonic in the exterior of a sphere of radius $\rho>0$ centered at $\bm{c} \in \mathbb{R}^3$ and continuous on $\mathbb{R}^3 \setminus B(\bm{c}, \rho)$, and if $\norm{f(\bm{x})} \to 0$ as $\norm{\bm{x}} \to \infty$, then by considering the the Poisson integral representation of $\mathcal{T}_{\bm{c},\rho}[f]^*$, we arrive at an integral representation of $f$ given by
\begin{align*}
f(\bm{x}) &= \norm{\left(\frac{\bm{x}-\bm{c}}{\rho}\right)^*} \mathcal{T}_{\bm{c},\rho}[f]^*\left(\left[\frac{\bm{x}-\bm{c}}{\rho}\right]^*\right)\\
&=
\frac{\rho}{\norm{\bm{x}-\bm{c}}}
\int_{\mathbb{S}^2}
\mathcal{P}\left(\frac{\rho}{\norm{\bm{x}-\bm{c}}},
\widehat{\bm{x}-\bm{c}} \cdot \bm{\xi}\right)
f(\bm{\bm{c}} + \rho \bm{\xi}) \, dS(\bm{\xi}).
\end{align*}
Proceeding in a similar manner to the local expansion case, we
can obtain a representation for $f$ in a series of spherical harmonics known as the \emph{multipole expansion}. We summarize this in the next definition.
\begin{definition}[Multipole expansion]%
Let $\rho > 0$ and $\bm{c} \in \mathbb{R}^3$. Let $f: \mathbb{R}^3 \setminus B(\bm{c}, \rho) \to \mathbb{C}$ be harmonic in the exterior of $B(\bm{c}, \rho)$ and continuous on $\mathbb{R}^3 \setminus B(\bm{c}, \rho)$. Define the multipole coefficients of $f$ via the integrals
\begin{equation}
M_n^m = \rho^{n+1} \int_{\mathbb{S}^2} f(\bm{c}+\rho{\bm{\xi}}) \overline{Y_n^m(\bm{\xi})} \, dS(\bm{\xi}).
\end{equation}
The function
\begin{equation}
    \mpole{\bm{c}}{p}[f](\bm{x}) = \sum_{n=0}^p
    \sum_{m=-n}^n M_n^m I_n^m(\bm{x} - \bm{c}),
    \quad \bm{x} \in \mathbb{R}^3 \setminus B(\bm{c}, \rho),
\end{equation}
is called the \emph{$p$-th order multipole expansion} of $f$ centered at $\bm{c}$.
\end{definition}
\begin{remark}%
\label{rem:multipole-analogue}%
Statements directly analogous to Remark~\ref{rem:fl} and Lemmas~\ref{lem:locexp-uniform-convergence}~and~\ref{lem:locexp-uniform-limit} continue to hold in the multipole case.
\end{remark}

\subsubsection{Translation operators}

The FMM relies on the ability to shift the center of expansions. This is accomplished through a \emph{translation operator}, which in this paper is denoted via repeated composition of the operators $\local{\bm{c}}{p}[\cdot]$ and $\mpole{\bm{c}}{p}[\cdot]$. Computationally, the translation operators in this paper are the mathematically the same up to a change of basis as the original Greengard--Rokhlin analytical translation operators~\cite{greengard:1988:thesis} and the `point-and-shoot' variants that optimize the original operators using a rotation of the coordinate system~\cite{white:point-and-shoot:1996}. Other kinds of translations, such as those based on plane wave expansions, though accomplishing the same purpose, are generally not mathematically equivalent.

We shall make use of the following well-known property of translation operators, which says that harmonic potentials undergo a sequence of translations may in some cases be treated as if intermediate translations were omitted.
\begin{lemma}[Omitting intermediate translations]%
\label{lem:translation-operator-idempotence}%
Let $\rho > 0$ and let $\bm{c'}, \bm{c} \in \mathbb{R}^3$ be expansion centers. Let $p', p \geq 0$ be integers. Consider a local expansion $\local{\bm{c}}{p}[f]: \bar{B}(\bm{c}, \rho) \to \mathbb{C}$ and a multipole expansion $\mpole{\bm{c}}{p}[g]: \mathbb{R}^3 \setminus B(\bm{c}, \rho) \to \mathbb{C}$.
\begin{enumerate}[(a)]
\item \label{lem:translation-operator-idempotence:local} If $p' \geq p$ and $\bm{c'} \in B(\bm{c}, \rho)$, then
$\local{\bm{c'}}{p'}[\local{\bm{c}}{p}[f]] = \local{\bm{c}}{p}[f]$ on $\bar{B}(\bm{c'}, \rho - \norm{\bm{c}-\bm{c'}})$.
\item \label{lem:translation-operator-idempotence:mpole} If $p' \leq p$, then
$\mpole{\bm{c'}}{p'}[\mpole{\bm{c}}{p}[g]] = \mpole{\bm{c'}}{p'}[g]$ on $\mathbb{R}^3 \setminus B(\bm{c'}, \rho + \norm{\bm{c}-\bm{c'}})$.
\end{enumerate}
\end{lemma}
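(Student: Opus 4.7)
The approach rests on three ingredients already established: the solid-harmonic addition theorems \eqref{eqn:rnm-addition-thm} and \eqref{eqn:inm-addition-thm}; the uniqueness of local and multipole expansion coefficients, which follows from Remark~\ref{rem:fl} (for local expansions) and its multipole counterpart noted in Remark~\ref{rem:multipole-analogue}; and the uniform-convergence Lemmas~\ref{lem:locexp-uniform-convergence}~and~\ref{lem:locexp-uniform-limit} in both their local form and the multipole form implied by Remark~\ref{rem:multipole-analogue}. The guiding observation is that under \eqref{eqn:rnm-addition-thm}, shifting the center preserves ``degree at most $n$'', whereas under \eqref{eqn:inm-addition-thm} it preserves ``degree at least $n$''; this monotonicity is exactly what makes either $p' \geq p$ (local case) or $p' \leq p$ (multipole case) the correct hypothesis for truncation to be lossless.

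For part~(a), I first observe that $\local{\bm{c}}{p}[f]$ is by construction a finite linear combination of the harmonic polynomials $R_n^m(\bm{x}-\bm{c})$ of degree $n \leq p$, and hence extends to a harmonic polynomial of degree at most $p$ on all of $\mathbb{R}^3$. I then apply \eqref{eqn:rnm-addition-thm} to the splitting $\bm{x}-\bm{c} = (\bm{c'}-\bm{c}) + (\bm{x}-\bm{c'})$ in each term, rewriting $\local{\bm{c}}{p}[f]$ as a \emph{finite} linear combination of $R_\nu^\mu(\bm{x}-\bm{c'})$ with $\nu \leq p$. Uniqueness of local coefficients (Remark~\ref{rem:fl}), applied on any ball about $\bm{c'}$ contained in $B(\bm{c},\rho)$, identifies these constants with the true local coefficients of $\local{\bm{c}}{p}[f]$ at $\bm{c'}$ and shows in particular that all coefficients of degree exceeding $p$ vanish. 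Since $p' \geq p$, truncation at order $p'$ is lossless, yielding $\local{\bm{c'}}{p'}[\local{\bm{c}}{p}[f]] = \local{\bm{c}}{p}[f]$ on $\bar{B}(\bm{c'}, \rho - \norm{\bm{c}-\bm{c'}})$.

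For part~(b), I set $h = g - \mpole{\bm{c}}{p}[g]$. Uniqueness of multipole coefficients at $\bm{c}$ identifies those of $\mpole{\bm{c}}{p}[g]$ with its literal coefficients $M_n^m$ for $n \leq p$, so the multipole coefficients of $h$ at $\bm{c}$ vanish for $n \leq p$, and $h(\bm{x}) = \sum_{n > p}\sum_m M_n^m I_n^m(\bm{x}-\bm{c})$ on $\mathbb{R}^3 \setminus \bar{B}(\bm{c},\rho)$. For $\bm{x}$ outside $\bar{B}(\bm{c'}, \rho + \norm{\bm{c}-\bm{c'}})$ we have $\norm{\bm{c'}-\bm{c}} < \norm{\bm{x}-\bm{c'}}$, so \eqref{eqn:inm-addition-thm} applies term-by-term and re-expands $I_n^m(\bm{x}-\bm{c})$ as an infinite series in $I_{n+\nu}^\mu(\bm{x}-\bm{c'})$ whose lowest appearing degree is $n > p$. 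After justifying the interchange of the outer $n$-sum with the inner $\nu$-sum using uniform convergence on compact subsets of the exterior (via the multipole forms of Lemmas~\ref{lem:locexp-uniform-convergence}~and~\ref{lem:locexp-uniform-limit}), $h$ is exhibited as a convergent series in $I_\nu^\mu(\bm{x}-\bm{c'})$ with $\nu > p$ only. Uniqueness at $\bm{c'}$ then forces the multipole coefficients of $h$ at $\bm{c'}$ to vanish for all $\nu \leq p$, a fortiori for $\nu \leq p'$, so $\mpole{\bm{c'}}{p'}[h] = 0$, and the claim follows by linearity of $\mpole{\bm{c'}}{p'}[\cdot]$.

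The main obstacle is the rigorous double-series rearrangement in part~(b): both the tail-of-$h$ series at $\bm{c}$ and each individual shift \eqref{eqn:inm-addition-thm} are infinite, so interchanging summations demands absolute and locally uniform convergence in the exterior of $\bar{B}(\bm{c'}, \rho + \norm{\bm{c}-\bm{c'}})$. I expect to secure this through straightforward decay estimates on the summands in \eqref{eqn:inm-addition-thm} together with the multipole analogues of Lemmas~\ref{lem:locexp-uniform-convergence}~and~\ref{lem:locexp-uniform-limit}, which additionally allow the expansion operator $\mpole{\bm{c'}}{p'}$ to be commuted with limits of partial sums. Part~(a), by contrast, is essentially algebraic once the addition theorem is in hand.
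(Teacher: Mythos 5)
Your proposal is correct and follows essentially the same route as the paper's (deliberately abbreviated) proof: both parts rest on the degree-monotonicity of the regular and irregular addition theorems, uniqueness of expansion coefficients, and the interchange of expansion operators with uniform limits. The only cosmetic difference is in part~(b), where the paper avoids the double-series rearrangement you flag by first considering the finite difference $\mpole{\bm{c}}{p+k}[g]-\mpole{\bm{c}}{p}[g]$ and then letting $k\to\infty$ via Lemma~\ref{lem:locexp-uniform-limit}'s multipole analogue, whereas you work with the infinite tail directly.
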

\begin{proof} As this result is standard, we only state its proof in abbreviated form. Let $k \geq 0$ be an integer.
If $p' \geq p$, from the addition theorem for regular solid harmonics, we observe that $\local{\bm{c'}}{p'+k}[\local{\bm{c}}{p}[f]] -
\local{\bm{c'}}{p'}[\local{\bm{c}}{p}[f]] = 0$. Letting $k \to \infty$, we obtain~(\ref{lem:translation-operator-idempotence:local}). If $p' \leq p$, from the addition theorem for irregular solid harmonics, it follows that $\mpole{\bm{c'}}{p'}[\mpole{\bm{c}}{p+k}[g] - \mpole{\bm{c}}{p}[g]] = 0$. Letting $k \to \infty$ implies $\mpole{\bm{c'}}{p'}[g - \mpole{\bm{c}}{p}[g]] = 0$, using the fact that we may interchange expansions and uniform limits, cf.~Lemma~\ref{lem:locexp-uniform-limit}. (\ref{lem:translation-operator-idempotence:mpole}) follows.
\end{proof}

\section{Approximation of local expansions}%
\label{sec:approximation-results}

This section presents error bounds for the approximation of local expansions. In GIGAQBX, one encounters the following abstract evaluation scenario when forming QBX local expansions. Consider
a source point located at $\bm{s} \in \mathbb{R}^3$ and an expansion
center at $\bm{c} \in \mathbb{R}^3$ with $\norm{\bm{s} - \bm{c}} > \rho$.
Let $\phi_{\bm{s}}$ be the potential due to $\bm{s}$ and suppose we have an approximation (such as a multipole expansion) to this point potential $\tilde{\phi}_{\bm{s}}$, valid in $\bar{B}(\bm{c}, \rho)$. We are interested in the accuracy of the approximation to the local expansion $\local{\bm{c}}{q}[\phi_{\bm{s}}]$ that can be attained by using $\local{\bm{c}}{q}[\tilde{\phi}_{\bm{s}}]$, i.e., the quantity
\[ \bignorm{\local{\bm{c}}{q}[\phi_{\bm{s}}] - \local{\bm{c}}{q}[{\tilde{\phi}}_{\bm{s}}]}_\infty \]
on $\bar{B}(\bm{c}, \rho)$. Furthermore, as is typical in FMM calculations, we often have an estimate for the `point' error $\bignorm{\phi_{\bm{s}} - \tilde{\phi}_{\bm{s}}}_\infty$. Ideally, we would like to compare the two errors.

In the first part of this section, we consider the case that the approximation
$\tilde{\phi}_{\bm{s}}$ is an arbitrary harmonic function. Though having the advantage of being generic, the bound derived in the first part is often conservative. In the second part, we are concerned with the special case that the approximation is obtained from a sequence of multipole/local translation operators, and derive a more precise bound.


\subsection{Generic error bounds}
\label{sec:approximation-theory-results}

Suppose that we are given an approximation to a (harmonic) potential that is itself harmonic. (Some examples of
harmonic approximations include multipole/local expansions, plane wave expansions~\cite{greengard_rokhlin_1997}, or linear combinations of fundamental solutions from point sources arranged on a sphere~\cite{anderson:fmm:1992,ying:kifmm:2004}.) A bound on the accuracy of the approximation to the local expansion may be given with the help of the following result from approximation theory.

\begin{proposition}[Norm of the Fourier--Laplace projection]%
\label{prop:fourier-laplace-norm}%
Let $f: \mathbb{S}^2 \to \mathbb{C}$ be continuous. For each $p \in \mathbb{N}_0$, a constant $\Lambda_p > 0$ independent of $f$ exists such that
\begin{equation}
\bignorm{\mathcal{F}_p f}_{\infty} \leq \Lambda_p \bignorm{f}_{\infty}.
\end{equation}
The constant $\Lambda_p$ satisfies
\begin{equation} \Lambda_p = \sqrt{\frac{8p}{\pi}} + o(\sqrt{p}). \end{equation}
\end{proposition}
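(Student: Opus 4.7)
The plan is to realize $\fourier{p}$ as an integral operator against a reproducing kernel and identify $\Lambda_p$ as a Lebesgue constant. Substituting $\innerprod{f}{Y_n^m} = \int_{\mathbb{S}^2} f \overline{Y_n^m} \, dS$ into the definition of $\fourier{p}[f]$ and interchanging the finite sum with the integral, the spherical harmonic addition theorem~\eqref{eqn:sph-harm-addition-thm} yields
\[
\fourier{p}[f](\bm{\xi}) = \int_{\mathbb{S}^2} K_p(\bm{\xi} \cdot \bm{\eta}) f(\bm{\eta}) \, dS(\bm{\eta}), \quad K_p(t) = \frac{1}{4\pi} \sum_{n=0}^p (2n+1) P_n(t).
\]
Taking the supremum over $\|f\|_\infty \leq 1$ and approximating a sign function of $K_p(\bm{\xi} \cdot \cdot)$ by continuous functions shows that the sharp constant is $\Lambda_p = \sup_{\bm{\xi} \in \mathbb{S}^2} \int_{\mathbb{S}^2} |K_p(\bm{\xi} \cdot \bm{\eta})| \, dS(\bm{\eta})$. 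By rotational invariance of the surface measure, the supremum does not depend on $\bm{\xi}$, and changing to spherical coordinates aligned with $\bm{\xi}$ reduces this to a one-dimensional integral $\Lambda_p = \tfrac{1}{2} \int_{-1}^1 |\sum_{n=0}^p (2n+1) P_n(t)| \, dt$, which is clearly finite for each $p$.

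Next, I would collapse the inner sum with the Christoffel--Darboux identity for Legendre polynomials, evaluated at $y = 1$ and simplified using $P_n(1) = 1$:
\[
\sum_{n=0}^p (2n+1) P_n(t) = (p+1)\frac{P_{p+1}(t) - P_p(t)}{t-1}.
\]
Substituting $t = \cos\theta$ (so $1-t = 2\sin^2(\theta/2)$ and $dt = -\sin\theta\, d\theta$) gives the more tractable form
\[
\Lambda_p = \frac{p+1}{2} \int_0^\pi \cot(\theta/2) \, \bigl|P_{p+1}(\cos\theta) - P_p(\cos\theta)\bigr| \, d\theta.
\]

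To extract the asymptotic, I would split $[0,\pi]$ into a boundary layer near $\theta=0$ (of width $\sim 1/p$), where a Taylor expansion of $P_n(\cos\theta)$ about $1$ bounds the contribution by $O(1)$, and the bulk, where I would apply Hilb's asymptotic formula $P_n(\cos\theta) = \sqrt{2/(n\pi \sin\theta)} \cos((n+\tfrac{1}{2})\theta - \pi/4) + O(n^{-3/2})$. The difference $P_{p+1}(\cos\theta) - P_p(\cos\theta)$ then becomes, via a sum-to-product step, a product of $2\sin(\theta/2)$ (canceling the singular $\cot(\theta/2)$) with an oscillatory factor whose mean absolute value is $2/\pi$. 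Tracking constants shows that the bulk integral contributes $(p+1) \cdot O(p^{-1/2}) \cdot (2/\pi) \cdot \int_0^\pi (\sin\theta)^{-1/2} \, d\theta / \sqrt{2}$ at leading order, which after simplification gives $\Lambda_p = \sqrt{8p/\pi} + o(\sqrt{p})$.

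The hard part is the asymptotic step: this is Gronwall's classical theorem (1914) on the Lebesgue constant for Fourier--Laplace expansions on $\mathbb{S}^2$, and a fully rigorous self-contained derivation requires delicate bookkeeping near $\theta=0$ (where Hilb's formula degrades) and careful management of the oscillation to recover the exact leading constant $\sqrt{8/\pi}$. I would cite a standard reference in approximation theory on the sphere (e.g., Gronwall or Dai--Xu) for the asymptotic rather than reproduce the full calculation.
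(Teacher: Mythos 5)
Your proposal takes essentially the same route as the paper: both realize $\mathcal{F}_p$ as an integral operator against the kernel $\frac{1}{4\pi}\sum_{n=0}^p(2n+1)P_n(\bm{\xi}\cdot\bm{\eta})$ via the spherical harmonic addition theorem, identify $\Lambda_p$ as the $L^1$ norm of that kernel (the Lebesgue constant), and defer the asymptotic $\sqrt{8p/\pi}$ to Gronwall's 1914 computation. Your extra sketch of Gronwall's argument (Christoffel--Darboux plus Hilb's formula) is sound in outline --- though the displayed leading-order bulk integral drops the $\cos(\theta/2)$ factor left after $2\sin(\theta/2)$ cancels $\cot(\theta/2)$ --- and since you cite the reference for the rigorous version, this does not affect correctness.
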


That $\mathcal{F}_p$ is a bounded operator on $C(\mathbb{S}^2)$ is evident from writing it as an integral (using~\eqref{eqn:sph-harm-addition-thm})
\[
\mathcal{F}_p f(\bm{x}) = \int_{\mathbb{S}^2} \left(
\sum_{n=0}^p \frac{2n+1}{4\pi} P_n(\bm{\xi} \cdot \bm{x})\right) f(\bm{\xi}) \, dS(\bm{\xi}),
\quad \bm{x} \in \mathbb{S}^2,
\]
and observing that the integral kernel is a continuous function. The norm (or \emph{Lebesgue constant}) of the operator is the $L^1$ norm of the above kernel. The full computation of this norm is given in~\cite{gronwall:1914:laplace-series}.

Proposition~\ref{prop:fourier-laplace-norm} implies the following.
\begin{lemma}[Bound on local expansion growth]%
\label{lem:locexp-approx-theory-bound}%
Consider a local expansion of a function $f: \bar{B}(\bm{c}, \rho) \to \mathbb{C}$ harmonic in $B(\bm{c}, \rho)$ and continuous on $\bar{B}(\bm{c}, \rho)$. Then
\begin{equation}
\bignorm{\local{\bm{c}}{p}[f]}_{\infty} \leq
\Lambda_p \bignorm{f}_\infty.
\end{equation}
\end{lemma}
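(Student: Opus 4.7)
The plan is to reduce the bound on the local expansion, evaluated on the ball $\bar{B}(\bm{c},\rho)$, to the Lebesgue constant bound provided by Proposition~\ref{prop:fourier-laplace-norm}. The bridge is Remark~\ref{rem:fl}, which says that restricting $\local{\bm{c}}{p}[f]$ to a sphere of fixed radius $r$ about $\bm{c}$ produces exactly the Fourier--Laplace projection $\fourier{p}$ of $\mathcal{T}_{\bm{c},r}[f|_{\{\norm{\bm{y}-\bm{c}}=r\}}]$ on $\mathbb{S}^2$. Since $\mathcal{T}_{\bm{c},r}$ is merely an affine rescaling of the domain, it preserves supremum norms, so a pointwise application of Proposition~\ref{prop:fourier-laplace-norm} on every such sphere is enough to conclude.

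Concretely, I would first fix an arbitrary $\bm{x} \in \bar{B}(\bm{c},\rho)\setminus\{\bm{c}\}$ and set $r = \norm{\bm{x}-\bm{c}} \in (0,\rho]$. For $r < \rho$, the function $f$ is harmonic and continuous on the closed sub-ball $\bar{B}(\bm{c},r)$, so Remark~\ref{rem:fl} applies verbatim and gives
\[
\local{\bm{c}}{p}[f](\bm{x}) = \fourier{p}\bigl[\mathcal{T}_{\bm{c},r}[f|_{\{\norm{\bm{y}-\bm{c}}=r\}}]\bigr](\widehat{\bm{x}-\bm{c}}).
\]
Applying Proposition~\ref{prop:fourier-laplace-norm} to the right-hand side and using the norm-preservation of $\mathcal{T}_{\bm{c},r}$,
\[
\bigl|\local{\bm{c}}{p}[f](\bm{x})\bigr| \le \Lambda_p\,\bignorm{f|_{\{\norm{\bm{y}-\bm{c}}=r\}}}_\infty \le \Lambda_p\,\bignorm{f}_\infty.
\]
The degenerate case $\bm{x}=\bm{c}$ is handled directly from the definition of $L_0^0$ by Cauchy--Schwarz (or simply by continuity from inside). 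Taking the supremum over $\bm{x}$ yields the claim.

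The only real subtlety is the boundary radius $r=\rho$, where Remark~\ref{rem:fl} is being applied at a sphere where $f$ is only assumed continuous (not harmonic on a surrounding open set). The clean way to dispose of this is to observe that $\local{\bm{c}}{p}[f]$ is a harmonic polynomial in $\bm{x}-\bm{c}$, hence $|\local{\bm{c}}{p}[f]|^2$ is subharmonic, and by the maximum principle its supremum over $\bar{B}(\bm{c},\rho)$ is attained on the interior; so we only need the bound on spheres of radius $r<\rho$ and can pass to the supremum. Alternatively, one can approximate $f$ by its restriction to $\bar{B}(\bm{c},\rho')$ with $\rho'\uparrow\rho$ and invoke Lemma~\ref{lem:locexp-uniform-limit} together with uniform continuity on $\bar{B}(\bm{c},\rho)$. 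Either route is short; I expect the maximum-principle argument to be the cleaner of the two and the main (very minor) step requiring care in the write-up.
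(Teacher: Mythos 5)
Your proof is correct and is essentially the paper's own argument: apply the Fourier--Laplace identity of Remark~\ref{rem:fl} sphere by sphere, invoke Proposition~\ref{prop:fourier-laplace-norm}, and take the supremum over $r$. The boundary case $r=\rho$ needs no special treatment (and your maximum-principle aside misstates where a subharmonic function attains its maximum), since the identity~\eqref{eqn:fourier-laplace-local-connection} at $r=\rho$ holds directly by the definition~\eqref{eqn:local-coeff} of the coefficients $L_n^m$ as integrals over the boundary sphere, which is exactly how the paper handles it by taking the maximum over all $0 \leq r \leq \rho$.
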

\begin{proof}
For $\bm{x}$ with $\norm{\bm{x} - \bm{c}} = r \leq \rho$, (cf.~\eqref{eqn:fourier-laplace-local-connection})
\[
\local{\bm{c}}{p}[f](\bm{x}) = \fourier{p}
\left[\mathcal{T}_{\bm{c},r}
\left[
\left.
f
\right|_{\{\bm{y}: \norm{\bm{y}-\bm{c}} = r\}}
\right]
\right]
(\widehat{\bm{x}-\bm{c}}),
\]
Thus by Proposition~\ref{prop:fourier-laplace-norm} and the definition of~$\mathcal{T}_{\bm{c},r}$~\eqref{eqn:translated-scaled-function},
\[
\norm{\local{\bm{c}}{p}[f](\bm{x})} \leq
\Lambda_p \bignorm{\left. f \right|_{\{\bm{y}: \norm{\bm{y} - \bm{c}}=r\}}}_\infty.
\]
The result follows by taking the maximum over all $0 \leq r \leq \rho$.
\end{proof}

This lemma establishes that the error in approximation of a local expansion $\local{\bm{c}}{q}[\phi_{\bm{s}}]$ can be no worse than an order-dependent constant times the error in approximating the original potential:
\begin{equation}
\label{eqn:generic-local-approximation}
\bignorm{\local{\bm{c}}{q}[\phi_{\bm{s}}] - \local{\bm{c}}{q}[{\tilde{\phi}}_{\bm{s}}]}_\infty \leq \Lambda_q \bignorm{\phi_{\bm{s}} - \tilde{\phi}_{\bm{s}}}_\infty.
\end{equation}
For fixed order $q$, this says that as $\tilde{\phi}_{\bm{s}} \to \phi_{\bm{s}}$, we can expect the approximation to the local expansion to become proportionally more accurate.

\begin{remark}[Generalization to other kernels and dimensions]%
\label{rem:generalization}%
We have not considered local expansions of non-harmonic functions in this paper, but it is worth noting that Lemma~\ref{lem:locexp-approx-theory-bound} is not specific to the Laplace PDE. It holds true for local expansions of \emph{any PDE} that can be reformulated as a Fourier--Laplace series. For instance, for the Helmholtz kernel with parameter $k > 0$, and $\norm{\bm{x}} < \norm{\bm{y}}$, one has the addition theorem
\[
\frac{e^{ik\norm{\bm{x}-\bm{y}}}}{\norm{\bm{x}-\bm{y}}}
=
ik \sum_{n=0}^\infty (2n+1) j_n(k \norm{\bm{x}})
h_n(k \norm{\bm{y}})
P_n(\bm{\hat{x}} \cdot \bm{\hat{y}}),
\]
where the functions $j_n$ and $h_n$ are, respectively, spherical Bessel and Hankel functions of the first kind (cf.~\cite[eq.~(10.60.1,~10.60.2)]{nist:dlmf}). After applying the spherical harmonic addition theorem~\eqref{eqn:sph-harm-addition-thm}, one obtains a local expansion of the Helmholtz kernel as a series in spherical harmonics. It is clear that a direct analogue of Remark~\ref{rem:fl} holds for this local expansion.

In two dimensions, similar remarks apply. The main difference is that the orthonormal expansion of a function on the unit circle is expressed as a Fourier series. The Lebesgue constant of the Fourier projection is for the two-dimensional case is $\Lambda_p = 4/\pi^2 \log p + O(1)$~\cite[Lem.~2.2]{rivlin:1981:approximation-theory}.
\end{remark}

The main issue with~\eqref{eqn:generic-local-approximation} is that as $q \to \infty$ it implies a bound increasingly worse compared to the point error. The reason for this is that we have only made use of continuity of $\tilde{\phi}_{\bm{s}}$ on the boundary, which is not even sufficient to guarantee that the expansion of $\tilde{\phi}_{\bm{s}}$ converges there. The approximation $\tilde{\phi}_{\bm{s}}$ is often a smooth function and therefore should obey a better bound. In the next section, we analyze a common situation in which the approximation comes from a sequence of multipole/local translations. In that case, we show that one can replace $\Lambda_q$ in~\eqref{eqn:generic-local-approximation} with the constant~$1$.

\subsection{Error bounds for Greengard--Rokhlin-style translation operators}
The bounds in this section consider the case that our approximation to the potential, $\tilde{\phi}_{\bm{s}}$, is formed using a  three-dimensional Laplace FMM making use of the original Greengard--Rokhlin translation operators or their rotation-based variant. Without loss of generality we may restrict our attention to those translations found at a single level of the hierarchical tree structure in the FMM, because intermediate translations that cross levels in the hierarchy do not change the value of the expansion (see Lemma~\ref{lem:translation-operator-idempotence}). This leaves three kinds of translations. Here and in the rest of this section, $p$ represents an intermediate translation order and $q$ represents a final expansion order.
\paragraph{Source $\to$ Local($p$) $\to$ Local($q$)} This corresponds to a \emph{List~4} interaction of a typical FMM, or \emph{List~4~close} of GIGAQBX\@. This case is covered in Theorem~\ref{thm:l2qbxl}.
For the definition of the interaction lists in the canonical adaptive FMM, see~\cite[Sec.~3.2]{carrier:1988:adaptive-fmm}, and for the ones in the GIGAQBX FMM, see~\cite[Sec.~3.2]{wala:2020:gigaqbxts}.
\paragraph{Source $\to$ Multipole($p$) $\to$ Local($q$)} This case corresponds to a \emph{List~3} interaction of a typical FMM (\emph{List~3~close} of GIGAQBX), and is covered in Theorem~\ref{thm:m2qbxl}.
\paragraph{Source $\to$ Multipole($p$) $\to$ Local($p$) $\to$ Local($q$)} This corresponds to \emph{List~2} interaction of the FMM\@. This is covered in Theorem~\ref{thm:m2l2qbxl}.

\subsubsection{Preliminaries}

The following three lemmas are used later. Lemma~\ref{prop:binom-product-inequality} gives combinatorial estimates. Lemmas~\ref{lem:rn0-local} and~\ref{lem:in0-local} bound the growth of local expansions of solid harmonics.

\begin{lemma}%
\label{prop:binom-product-inequality}%
Let $n, m, k \in \mathbb{N}_0$.
\begin{enumerate}[(a)]
\item If $n \geq k$, then
\[\binom{n+k}{m} \binom{n-k}{m} \leq \binom{n}{m}^2.\]
\item If $m \geq k$, then
\[\binom{n}{m+k} \binom{n}{m-k} \leq \binom{n}{m}^2.\]
\end{enumerate}
\end{lemma}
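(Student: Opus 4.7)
The plan is to prove both parts by showing the one-parameter sequences $f(k) = \binom{n+k}{m}\binom{n-k}{m}$ (for (a)) and $g(k) = \binom{n}{m+k}\binom{n}{m-k}$ (for (b)) are nonincreasing in $k \geq 0$; since $f(0) = g(0) = \binom{n}{m}^{2}$, this yields the stated bounds. Both are instances of the log-concavity of binomial coefficients, but a direct ratio computation is shorter than invoking that fact as a black box.

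For part (a), in the regime $n - k \geq m$ where all four binomials are strictly positive, the elementary identity $\binom{a+1}{m}/\binom{a}{m} = (a+1)/(a+1-m)$ gives
\[
\frac{f(k+1)}{f(k)} \;=\; \frac{(n+k+1)(n-k-m)}{(n+k+1-m)(n-k)}.
\]
Cross-multiplying, the inequality $f(k+1) \leq f(k)$ reduces to $(n+k+1)(n-k-m) \leq (n+k+1-m)(n-k)$, and I expect the two sides to differ by $m(2k+1) \geq 0$ after expansion. The analogous computation for part (b) produces
\[
\frac{g(k+1)}{g(k)} \;=\; \frac{(n-m-k)(m-k)}{(m+k+1)(n-m+k+1)},
\]
and the same cross-multiplication should collapse to the difference $(n+1)(2k+1) \geq 0$.

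The only care required is at the boundary: if any shifted binomial on the left-hand side vanishes (e.g., $n < m$ or $n-k < m$ in (a); or $k = m$ so $\binom{n}{m-k-1}=0$, or $m+k > n$ in (b)), then the left-hand side of the corresponding inequality is already zero and the bound is trivial, so the induction either does not start or terminates harmlessly as soon as a factor hits zero. The main obstacle, such as it is, is algebraic bookkeeping in the two polynomial identities; nothing deeper is at play.
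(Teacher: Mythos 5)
Your proof is correct, and it is essentially the argument the paper has in mind: the paper's proof simply says the claim follows by induction on $k$ (after disposing of the degenerate cases $m=0$ and $m\geq n$), and your ratio computations---with the differences $m(2k+1)$ and $(n+1)(2k+1)$, both of which check out---are exactly that inductive step made explicit. Your handling of the vanishing-binomial boundary cases is also sound, since once a left-hand factor hits zero it stays zero for all larger $k$.
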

\begin{proof}
This is obvious when $m = 0$ or $m \geq n$.
When $0 < m < n$, this follows by induction on $k$.
\end{proof}

\begin{lemma}[Bound on local expansion of $R_n^0$]%
\label{lem:rn0-local}%
Let $n, p \geq 0$ be integers and let $\bm{c}, \bm{t} \in \mathbb{R}^3$. Then
\begin{equation}
\norm{\local{\bm{c}}{p}[R_n^0](\bm{t})} \leq \sqrt{\frac{2n+1}{4\pi}}
\left( \norm{\bm{c}} + \norm{\bm{t} - \bm{c}} \right)^n.
\end{equation}
\end{lemma}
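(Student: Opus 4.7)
The plan is to use a Whittaker-style integral representation of $R_n^0$ against complex null directions, and then to apply the local-expansion operator underneath that integral. For $\tau \in [0, 2\pi]$, set $\bm{v}(\tau) = \transpose{(i\cos\tau,\, i\sin\tau,\, 1)}$, which satisfies $\bm{v}(\tau)\cdot\bm{v}(\tau)=0$. Starting from Laplace's first integral $P_n(\cos\theta) = \pi^{-1}\int_0^\pi(\cos\theta + i\sin\theta\cos\phi)^n\,d\phi$ and the definition $R_n^0(\bm{x}) = A_n^0 \norm{\bm{x}}^n P_n(\hat{\bm{x}}\cdot\hat{\bm{z}})$, a change of integration variable together with $2\pi$-periodicity gives
\[
R_n^0(\bm{x}) = \frac{A_n^0}{2\pi}\int_0^{2\pi}(\bm{v}(\tau)\cdot\bm{x})^n\,d\tau.
\]
The key property is that, because $\bm{v}(\tau)$ is null, each integrand $g_\tau(\bm{x}) = (\bm{v}(\tau)\cdot\bm{x})^n$ is a harmonic homogeneous polynomial in $\bm{x}$ of degree exactly $n$.

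Next I would compute $\local{\bm{c}}{p}[g_\tau]$ explicitly. Expanding around $\bm{c}$ by the binomial theorem,
\[
g_\tau(\bm{x}) = \sum_{k=0}^{n}\binom{n}{k}(\bm{v}(\tau)\cdot\bm{c})^{n-k}(\bm{v}(\tau)\cdot(\bm{x}-\bm{c}))^k,
\]
each summand is a harmonic homogeneous polynomial of degree $k$ in $\bm{x}-\bm{c}$, so by uniqueness of local coefficients (cf.~the discussion after Remark~\ref{rem:fl}) the order-$p$ local expansion of $g_\tau$ is obtained simply by truncating this sum at $k = \min(p,n)$. For any real $\bm{u}\in\mathbb{R}^3$, Cauchy--Schwarz on $(u_1, u_2)\cdot(\cos\tau, \sin\tau)$ yields
\[
|\bm{v}(\tau)\cdot\bm{u}|^2 = (u_1\cos\tau + u_2\sin\tau)^2 + u_3^2 \leq \norm{\bm{u}}^2,
\]
so comparing the truncated sum to the full binomial expansion gives the mode-wise bound
\[
|\local{\bm{c}}{p}[g_\tau](\bm{t})| \leq \sum_{k=0}^{\min(p,n)}\binom{n}{k}\norm{\bm{c}}^{n-k}\norm{\bm{t}-\bm{c}}^k \leq (\norm{\bm{c}}+\norm{\bm{t}-\bm{c}})^n.
\]

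Finally I would interchange the integral in $\tau$ with the local-expansion operator, which is legitimate since $\local{\bm{c}}{p}[\cdot]$ acts through a continuous integral kernel against its argument restricted to a sphere about $\bm{c}$ (cf.~\eqref{eqn:local-coeff} and the reasoning of Lemma~\ref{lem:locexp-uniform-limit}); multiplying by $A_n^0/(2\pi)$ and integrating over $\tau$ yields the claimed bound $A_n^0(\norm{\bm{c}}+\norm{\bm{t}-\bm{c}})^n$. The main conceptual step, I expect, is recognizing that the null-vector integral representation is the right device: a direct attack through the addition theorem~\eqref{eqn:rnm-addition-thm} appears to require a bound on the inner $\mu$-sum of Wigner-type coefficients that does not follow from the triangle inequality alone, whereas integrating against simple harmonic monomials $(\bm{v}(\tau)\cdot\bm{x})^n$ makes the required binomial estimate essentially immediate.
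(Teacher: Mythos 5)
Your proof is correct, but it takes a genuinely different route from the paper's. The paper attacks $\local{\bm{c}}{p}[R_n^0]$ head-on via the addition theorem~\eqref{eqn:rnm-addition-thm}: it groups the expansion into degree-$\nu$ blocks $S^R_{n,\nu}$, and controls the inner $\mu$-sum by exactly the combination you predicted would be needed --- Cauchy--Schwarz over $\mu$, the combinatorial inequality $\binom{n}{\nu-\mu}\binom{n}{\nu+\mu}\leq\binom{n}{\nu}^2$ (Lemma~\ref{prop:binom-product-inequality}), and the addition-theorem corollary $\sum_m\norm{Y_\nu^m(\bm{\xi})}^2=(2\nu+1)/(4\pi)$ --- arriving at the same blockwise bound $\sqrt{(2n+1)/(4\pi)}\,\binom{n}{\nu}\norm{\bm{c}}^{n-\nu}\norm{\bm{t}-\bm{c}}^{\nu}$ that you obtain from the truncated binomial expansion of $(\bm{v}(\tau)\cdot\bm{x})^n$. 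Your Whittaker--Laplace null-direction representation sidesteps all of the Wigner-type bookkeeping: since $\bm{v}(\tau)\cdot\bm{v}(\tau)=0$, each $(\bm{v}(\tau)\cdot(\bm{x}-\bm{c}))^k$ is already a degree-$k$ harmonic homogeneous polynomial, so uniqueness of local coefficients makes the order-$p$ expansion a literal truncation of a scalar binomial sum, and $\norm{\bm{v}(\tau)\cdot\bm{u}}\leq\norm{\bm{u}}$ does the rest. This is more elementary and arguably more transparent. What the paper's machinery buys is reusability: the identical Cauchy--Schwarz/addition-corollary template carries over verbatim to the irregular harmonics $I_n^0$ (Lemma~\ref{lem:in0-local}) and, crucially, to the rotated two-stage expansion in Theorem~\ref{thm:m2l2qbxl}, where the unitarity of the rotation matrices $C_\nu$ slots directly into the same $\ell^2$ estimate; a null-direction representation for $I_n^0$ and for the composed translations would require additional work (e.g., Laplace's second integral), so if you wanted to complete the program of the paper with your method you would need a separate device for those cases. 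One small point of care in your write-up: the interchange of $\int_0^{2\pi}d\tau$ with $\local{\bm{c}}{p}[\cdot]$ is indeed justified by Fubini applied to the continuous kernel in~\eqref{eqn:local-coeff}, but you should note that the resulting expansion is independent of the auxiliary radius $\rho$ used in that definition (immediate here since the integrand is a polynomial, harmonic on all of $\mathbb{R}^3$).
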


\begin{proof}
To obtain the local expansion of $R_n^0$, apply the addition theorem for solid harmonics~\eqref{eqn:rnm-addition-thm} to the vectors $\bm{x} = \bm{t} - \bm{c}$ and $\bm{y} = \bm{c}$. Define the terms of the local expansion
\[
S^R_{n, \nu}(\bm{c}, \bm{t}) =
  A_n^0 \sum_{\ilocloc{\mu}{\nu}{0}{n-\nu}}
  \binom{n}{\nu+\mu}
  \frac{R_{n-\nu}^{-\mu}(\bm{c}) R_{\nu}^{\mu}(\bm{t}-\bm{c})}{A^{-\mu}_{n-\nu}A_{\nu}^{\mu}},
  \quad \nu \in \{0, \ldots, n\},
\]
so that
\begin{equation}
  \label{eqn:local-rn0-v2}
  \local{\bm{c}}{p}[R_n^0](\bm{t})
  = \sum_{\nu=0}^{\min(p, n)} S^R_{n,\nu}(\bm{c}, \bm{t}).
\end{equation}

After expanding the combinatorial, normalization, and spherical harmonic factors in
$S^R_{n,\nu}(\bm{c}, \bm{t})$, it can be written as
\[
S^R_{n,\nu}(\bm{t},\bm{c})
=
A_n^0 \sum_{\ilocloc{\mu}{\nu}{0}{n-\nu}}
\alpha^{\mu}_{n,\nu}(\bm{c})
\beta^{\mu}_{n,\nu}(\bm{t} - \bm{c}),
\quad \nu \in \{0, \ldots, n\},
\]
where
\begin{align}
    \alpha_{n,\nu}^{\mu}(\bm{x})
    &=
    \sqrt{ \frac{4\pi}{2(n-\nu)+1} \binom{n}{\nu-\mu} \binom{n}{\nu+\mu} }
    \norm{\bm{x}}^{n-\nu}
    Y^{-\mu}_{n-\nu}(\bm{\hat{x}}), \\
    \beta_{n,\nu}^{\mu}(\bm{x})
    &=
    \sqrt{ \frac{4\pi}{2\nu+1}}
    \norm{\bm{x}}^{\nu}
    Y_{\nu}^{\mu}(\bm{\hat{x}}).
\end{align}
(To obtain this, first expand the combinatorial and normalization terms
in~$S^R_{n,\nu}(\bm{c}, \bm{t})$, yielding as an intermediate step
\[
\frac{\binom{n}{\nu+\mu}}{A_{n-\nu}^{-\mu} A_{\nu}^{\mu}}
=
\sqrt{\frac{(4\pi)^2}{(2(n-\nu)+1)(2\nu+1)} \frac{n!^2}{(\nu-\mu)!(\nu+\mu)!(n-\nu+\mu)!(n-\nu-\mu)!}}.
\]
We omit intermediate calculations of this form in the remainder of this paper.)

Via the Cauchy--Schwarz inequality
\begin{equation}
\label{eqn:snnu-cauchy-schwartz}
\norm{S^R_{n,\nu}(\bm{c},\bm{t})}^2
\leq
(A_n^0)^2
\left(
\sum_{\ilocloc{\mu}{\nu}{0}{n-\nu}}
\norm{\alpha^{\mu}_{n,\nu}(\bm{c})}^2
\right)
\left(
\sum_{\ilocloc{\mu}{\nu}{0}{n-\nu}}
\norm{\beta^{\mu}_{n,\nu}(\bm{t}-\bm{c})}^2
\right)
.
\end{equation}
We can bound the second term on the right-hand side as
\begin{align}
    \label{eqn:alphannumu-sum-sq-bound}
    \sum_{\ilocloc{\mu}{\nu}{0}{n-\nu}}
    \norm{\alpha_{n,\nu}^{\mu}(\bm{x})}^2 &=
    \left(
    \sqrt{\frac{4\pi}{2(n-\nu)+1}}
    \norm{\bm{x}}^{(n-\nu)}
    \right)^2 \\
    &\quad\quad
    \sum_{\mu = \max(-\nu, -(n-\nu))}^{\min(\nu, n-\nu)}
    \binom{n}{\nu-\mu}
    \binom{n}{\nu+\mu}
    \norm{Y_{n-\nu}^{-\mu}(\bm{\hat{x}})}^2 \nonumber \\
    &\leq
    \left[ \sqrt{\frac{4\pi}{2(n-\nu)+1}}
    \norm{\bm{x}}^{n-\nu}
    \binom{n}{\nu}\right]^2
    \sum_{\mu=-(n-\nu)}^{n-\nu} \norm{Y_{n-\nu}^{-\mu}(\bm{\hat{x}})}^2 \label{eqn:alphannumu-sum-sq-bound-step1} \\
    &=
    \left[
    \binom{n}{\nu}
    \norm{\bm{x}}^{n-\nu}
    \right]^2, \label{eqn:alphannumu-sum-sq-bound-step2}
\end{align}
where we have used Lemma~\ref{prop:binom-product-inequality} in~\eqref{eqn:alphannumu-sum-sq-bound-step1}, and~\eqref{eqn:sph-harm-addition-corollary} in~\eqref{eqn:alphannumu-sum-sq-bound-step2}. The third term on the right-hand side above is bounded as
\begin{align}
    \label{eqn:betannumu-sum-sq-bound}
    \sum_{\ilocloc{\mu}{\nu}{0}{n-\nu}}
    \norm{\beta_{n,\nu}^{\mu}(\bm{x})}^2 &=
    \left(
    \sqrt{\frac{4\pi}{2\nu+1}}
    \norm{\bm{x}}^{\nu}
    \right)^2
    \sum_{\mu = \max(-\nu, -(n-\nu))}^{\min(\nu, n-\nu)}
    \norm{Y_{\nu}^{\mu}(\bm{\hat{x}})}^2 \\
    &\leq
    \left(
    \sqrt{\frac{4\pi}{2\nu+1}}
    \norm{\bm{x}}^{\nu}
    \right)^2
    \sum_{\mu=-\nu}^{\nu}
    \norm{Y_{\nu}^{\mu}(\bm{\hat{x}})}^2 \nonumber \\
    &= \norm{\bm{x}}^{2\nu}. \nonumber
\end{align}
Combining~\eqref{eqn:snnu-cauchy-schwartz},~\eqref{eqn:alphannumu-sum-sq-bound}, and~\eqref{eqn:betannumu-sum-sq-bound},
\begin{equation}
\label{eqn:snnu-bound}
\norm{S^R_{n,\nu}(\bm{c}, \bm{t})} \leq
\sqrt{\frac{2n+1}{4\pi}}
\binom{n}{\nu} \norm{\bm{c}}^{n-\nu} \norm{\bm{t} - \bm{c}}^{\nu}.
\end{equation}
Applying~\eqref{eqn:snnu-bound} to~\eqref{eqn:local-rn0-v2}, we obtain
\begin{align}
    \norm{\local{\bm{c}}{p}[R_n^0](\bm{t})}
    &\leq \sqrt{\frac{2n+1}{4\pi}} \sum_{\nu=0}^{\min(p,n)}
    \binom{n}{\nu} \norm{\bm{c}}^{n-\nu} \norm{\bm{t} - \bm{c}}^{\nu} \nonumber \\
    &\leq \sqrt{\frac{2n+1}{4\pi}} (
    \norm{\bm{c}} +  \norm{\bm{t} - \bm{c}})^n
     \nonumber.
\end{align}
\end{proof}

\begin{lemma}[Bound on local expansion of $I_n^0$]%
\label{lem:in0-local}%
Let $n, p \geq 0$ be integers and let $\bm{c}, \bm{t} \in \mathbb{R}^3$ with $\norm{\bm{t}-\bm{c}} < \norm{\bm{c}}$. Then
\[
\norm{\local{\bm{c}}{p}[I_n^0](\bm{t})} \leq \sqrt{\frac{2n+1}{4\pi}}
\left( \norm{\bm{c}} - \norm{\bm{t} - \bm{c}} \right)^{-(n+1)}.
\]
\end{lemma}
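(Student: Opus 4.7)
The strategy mirrors that of Lemma~\ref{lem:rn0-local}, but starts from the addition theorem for irregular solid harmonics~\eqref{eqn:inm-addition-thm} instead of the one for regular solid harmonics. The plan is to (i) write out the local expansion termwise, (ii) use Cauchy--Schwarz on the inner $\mu$-sum in each degree, (iii) collapse the spherical harmonic sums using~\eqref{eqn:sph-harm-addition-corollary} and the binomial inequality in Lemma~\ref{prop:binom-product-inequality}, and (iv) close with the generalized binomial theorem.

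First, apply~\eqref{eqn:inm-addition-thm} with $\bm{x}=\bm{t}-\bm{c}$, $\bm{y}=\bm{c}$, and $m=0$. Since $\norm{\bm{t}-\bm{c}}<\norm{\bm{c}}$ the series converges, and by the uniqueness of local coefficients (discussed after Remark~\ref{rem:fl}), truncating the outer sum to $\nu \leq p$ yields the local expansion, which I will write as $\local{\bm{c}}{p}[I_n^0](\bm{t})=\sum_{\nu=0}^p S^I_{n,\nu}(\bm{c},\bm{t})$ with
\[
S^I_{n,\nu}(\bm{c},\bm{t})
= A_n^0 \sum_{\mu=-\nu}^{\nu} (-1)^{\nu-\mu}\binom{n+\nu-\mu}{n}
\frac{I_{n+\nu}^{\mu}(\bm{c})\,R_{\nu}^{-\mu}(\bm{t}-\bm{c})}{A_{n+\nu}^{\mu}A_{\nu}^{-\mu}}.
\]

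Next, in analogy with the proof of Lemma~\ref{lem:rn0-local}, I would factor each summand as $\alpha^{\mu}_{n,\nu}(\bm{c})\,\beta^{\mu}_{n,\nu}(\bm{t}-\bm{c})$, with
\[
\alpha^{\mu}_{n,\nu}(\bm{x}) = \sqrt{\tfrac{4\pi}{2(n+\nu)+1}\binom{n+\nu-\mu}{n}\binom{n+\nu+\mu}{n}}\,\norm{\bm{x}}^{-(n+\nu+1)} Y_{n+\nu}^{\mu}(\bm{\hat{x}}),
\]
\[
\beta^{\mu}_{n,\nu}(\bm{x}) = \sqrt{\tfrac{4\pi}{2\nu+1}}\,\norm{\bm{x}}^{\nu}Y_{\nu}^{-\mu}(\bm{\hat{x}}),
\]
verifying (by a short factorial computation) that this redistribution reproduces the combinatorial and normalization constants in $S^I_{n,\nu}$. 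Then Cauchy--Schwarz gives $|S^I_{n,\nu}|^2 \leq (A_n^0)^2 (\sum_\mu|\alpha^{\mu}_{n,\nu}(\bm{c})|^2)(\sum_\mu|\beta^{\mu}_{n,\nu}(\bm{t}-\bm{c})|^2)$.

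For the $\alpha$-sum, I would apply Lemma~\ref{prop:binom-product-inequality}(a) with the substitution $k=|\mu|$ (legitimate because $|\mu|\leq\nu\leq n+\nu$) to replace $\binom{n+\nu-\mu}{n}\binom{n+\nu+\mu}{n}$ by $\binom{n+\nu}{n}^2$, and then extend the $\mu$-range from $[-\nu,\nu]$ to $[-(n+\nu),n+\nu]$ so~\eqref{eqn:sph-harm-addition-corollary} applies, yielding $\sum_\mu|\alpha^{\mu}_{n,\nu}(\bm{c})|^2 \leq \binom{n+\nu}{n}^2\norm{\bm{c}}^{-2(n+\nu+1)}$. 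The $\beta$-sum is exactly~\eqref{eqn:sph-harm-addition-corollary} on the sphere of degree $\nu$, giving $\norm{\bm{t}-\bm{c}}^{2\nu}$. Combining and taking square roots,
\[
|S^I_{n,\nu}(\bm{c},\bm{t})| \leq \sqrt{\tfrac{2n+1}{4\pi}}\binom{n+\nu}{n}\norm{\bm{c}}^{-(n+\nu+1)}\norm{\bm{t}-\bm{c}}^{\nu}.
\]

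Finally, summing over $\nu\in\{0,\ldots,p\}$, extending the sum to infinity (all terms are nonnegative), and applying the generalized binomial series $\sum_{\nu=0}^{\infty}\binom{n+\nu}{n}x^{\nu}=(1-x)^{-(n+1)}$ with $x=\norm{\bm{t}-\bm{c}}/\norm{\bm{c}}<1$ produces the claimed bound $\sqrt{(2n+1)/(4\pi)}\,(\norm{\bm{c}}-\norm{\bm{t}-\bm{c}})^{-(n+1)}$. The main obstacle is the bookkeeping in factoring the combinatorial coefficient into the $\alpha/\beta$ decomposition correctly; once that is set up, the remainder is a direct mirror of the $R_n^0$ argument, with the negative binomial series replacing the positive one.
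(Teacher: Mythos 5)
Your proposal is correct and follows essentially the same route as the paper's proof: the same termwise decomposition $S^I_{n,\nu}$ from the irregular solid harmonic addition theorem, the same $\alpha/\beta$ factorization, Cauchy--Schwarz combined with Lemma~\ref{prop:binom-product-inequality} and~\eqref{eqn:sph-harm-addition-corollary}, and the negative binomial series to close. No gaps.
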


\begin{proof}
Applying the addition theorem~\eqref{eqn:inm-addition-thm} to the vectors $\bm{x} = \bm{t} - \bm{c}$ and $\bm{y} = \bm{c}$, we obtain the local expansion of $I_n^0$, as follows. Define the terms of the expansion
\begin{equation}
\label{eqn:in0-snnu}
S^I_{n,\nu}(\bm{c}, \bm{t}) = A_n^0 \sum_{\mu \in I(0,\nu)}
(-1)^{\nu-\mu}
\binom{n+\nu-\mu}{n}
\frac{
I^{\mu}_{n+\nu}(\bm{c})
R^{-\mu}_{\nu}(\bm{t} - \bm{c})
}{
A^{\mu}_{n+\nu}
A^{-\mu}_{\nu}
},
\quad \nu \in \mathbb{N}_0,
\end{equation}
so that
\begin{equation}
\label{eqn:in0-local-exp}
\local{\bm{c}}{p}[I_n^0](\bm{t})
= \sum_{\nu=0}^p S^I_{n,\nu}(\bm{c},\bm{t}).
\end{equation}
Expanding out and simplifying~\eqref{eqn:in0-snnu}, we write $S^I_{n,\nu}(\bm{c}, \bm{t})$ as
\begin{equation}
\label{eqn:in0-snnu-simplified}
S^I_{n,\nu}(\bm{c},\bm{t})
=
A_n^0 \sum_{\mu\in I(0,\nu)}
(-1)^{\nu-\mu}
\alpha^{\mu}_{n,\nu}(\bm{c})
\beta^{\mu}_{n,\nu}(\bm{t} - \bm{c})
\end{equation}
where
\begin{align}
    \alpha_{n,\nu}^{\mu}(\bm{x})
    &=
    \sqrt{\frac{4\pi}{2(n+\nu)+1}
    \binom{n+\nu-\mu}{n} \binom{n+\nu+\mu}{n}}
    \norm{\bm{x}}^{-(n+\nu+1)}
    Y^{\mu}_{n+\nu}(\bm{\hat{x}}), \\
    \beta_{n,\nu}^{\mu}(\bm{x})
    &=
    \sqrt{ \frac{4\pi}{2\nu+1}}
    \norm{\bm{x}}^{\nu}
    Y_{\nu}^{-\mu}(\bm{\hat{x}}).
\end{align}
A calculation similar to the proof of the previous
lemma shows
\begin{equation}
\label{eqn:in0-snnu-term-bounds}
\sum_{\mu \in I(0,\nu)}
\norm{\alpha_{n,\nu}^{\mu}(\bm{x})}^2
\leq \left[\binom{n+\nu}{n} \norm{\bm{x}}^{-(n+\nu+1)}\right]^2,
\quad
\sum_{\mu \in I(0,\nu)}
\norm{\beta_{n,\nu}^{\mu}(\bm{x})}^2
\leq \norm{\bm{x}}^{2\nu},
\end{equation}
and applying the Cauchy--Schwarz inequality to~\eqref{eqn:in0-snnu-simplified}
using~\eqref{eqn:in0-snnu-term-bounds} yields
\begin{equation}
\label{eqn:in0-snnu-bound}
\norm{S^I_{n,\nu}(\bm{c}, \bm{t})} \leq
\sqrt{\frac{2n+1}{4\pi}} \binom{n+\nu}{n} \norm{\bm{c}}^{-(n+\nu+1)} \norm{\bm{t} - \bm{c}}^{\nu}.
\end{equation}
Therefore, from~\eqref{eqn:in0-local-exp},
\begin{align}
\norm{\local{\bm{c}}{p}[I_n^0](\bm{t})}
& \leq \sqrt{\frac{2n+1}{4\pi}}
\sum_{\nu=0}^p \binom{n+\nu}{n}
\norm{\bm{c}}^{-(n+\nu+1)} \norm{\bm{t} - \bm{c}}^{\nu} \nonumber \\
& \leq \sqrt{\frac{2n+1}{4\pi}}
\norm{\bm{c}}^{-(n+1)}
\sum_{\nu=0}^\infty \binom{n+\nu}{n}
\left(\frac{\norm{\bm{t} - \bm{c}}}{\norm{\bm{c}}}\right)^{\nu} \label{eqn:binomial-series-pt1}  \\
&=
\sqrt{\frac{2n+1}{4\pi}}
\left(\norm{\bm{c}} - \norm{\bm{t}-\bm{c}}\right)^{-(n+1)}.\label{eqn:binomial-series-pt2}
\end{align}
To obtain~\eqref{eqn:binomial-series-pt2} from~\eqref{eqn:binomial-series-pt1}
we used a variation of the binomial series~\cite[eq.~(4.6.7)]{nist:dlmf} for
negative exponents.
\end{proof}

\begin{figure}
\begin{minipage}[t]{0.48\textwidth}
  \centering
  \begin{tikzpicture}[scale=1]
    \node [disk, minimum size=3cm, color=localcolor](Gamma) at (0,0) {};
    \draw [fill] (Gamma) circle (1pt) node [below] {$\bm{0}$};
    \draw [<->] (Gamma.center) -- (Gamma.north) node [right, midway] {$r$};

    \path (Gamma) ++ (-45:0.9cm) node [disk, color=qbxcolor, minimum size=1cm](lexp) {};
    \path (lexp) ++ (20:0.5cm) node (y) {};
    \draw [fill] (lexp) circle (1pt) node [below] {$\bm{c}$};
    \draw [fill] (y) circle (1pt) node [right] {$\bm{t}$};

    \path (Gamma) ++ (180:4cm) node [draw=none, fill=none, inner sep=0pt, outer sep=0pt](gamma) {};
    \draw [fill] (gamma) circle (1pt) node [left] {$\bm{s}$};
    \draw [<->] (gamma) -- (Gamma.center) node [below, midway] {$R$};

    \draw [localcolor]
        (gamma)
        edge [bend left, ->, dashed]
        node [maskedbg, above, yshift=0.1cm, midway] {$\local{\bm{0}}{p}[\pot{\bm{s}}]$}
        (Gamma.center);

    \draw [qbxcolor]
        (Gamma.center)
        edge[bend left, ->, dashed]
        node [maskedbg, right, xshift=-0.2cm, yshift=0.4cm]
        {$\local{\bm{c}}{q}[\local{\bm{0}}{p}[\pot{\bm{s}}]]$}
        (lexp.center);

    \draw []
        (gamma)
        edge[bend right, ->, dashed]
        node[below, midway] {$\local{\bm{c}}{q}[\pot{\bm{s}}]$}
        (lexp.center);
  \end{tikzpicture}
  \caption{%
    Obtaining the local expansion of a point potential using an intermediate
    local expansion.  This provides the setting for Theorem~\ref{thm:l2qbxl}.
  }%
  \label{fig:l2qbxl}%
\end{minipage}
\hfill
\begin{minipage}[t]{0.48\textwidth}
  \centering
  \begin{tikzpicture}[scale=1]

    \path (0,0) node [disk, color=qbxcolor, minimum size=1cm](lexp) {};
    \path (lexp) ++ (20:0.5cm) node (y) {};
    \draw [fill] (lexp) circle (1pt) node [right] {$\bm{c}$};
    \draw [fill] (y) circle (1pt) node [right] {$\bm{t}$};

    \path (Gamma) ++ (-4cm, 0) node [disk, color=srccolor, minimum size=1.5cm](gamma) {};
    \path (gamma) ++(155:0.75cm) coordinate (source);
    \draw [fill] (gamma) circle (1pt) node [below] {$\bm{0}$};
    \draw [fill] (source) circle (1pt) node [left] {$\bm{s}$};

    \draw [color=srccolor]
        (source)
        edge [bend left, dashed, ->]
        node [midway, above, yshift=0.5cm] {$\mpole{\bm{0}}{p}[\pot{\bm{s}}]$}
        (gamma.center);

    \draw [<->] (gamma.center) -- (lexp) node [below, midway] {$R$};

    \draw[loosely dashed, color=srccolor]
         (gamma.center) ++ (-30:3.5cm) arc[start angle=-30, end angle=30, radius=3.5cm];

    \draw [<->] (gamma.center) -- (gamma.north)
        node [right, midway] {$r$};

    \draw [color=qbxcolor]
        (gamma.center)
        edge [bend left, dashed, ->]
        node [midway, above]
        {$\local{\bm{c}}{q}[\mpole{\bm{0}}{p}[\pot{\bm{s}}]]$}
        (lexp.center);

    \draw (source)
        edge [bend right=65, dashed, ->]
        node [midway, below] {$\local{\bm{c}}{q}[\pot{\bm{s}}]$}
        (lexp.center);
  \end{tikzpicture}
  \caption{%
    Obtaining the local expansion of a point potential using an
    intermediate multipole expansion. This provides the geometric setting for
    Theorem~\ref{thm:m2qbxl}.
  }%
  \label{fig:m2qbxl}%
\end{minipage}
\hfill
\end{figure}

\begin{figure}
  \centering
  \begin{tikzpicture}[scale=1]
    \tikzset{%
    }
    \node [disk, minimum size=3cm, color=localcolor](Gamma) at (0,0) {};
    \draw [fill] (Gamma) circle (1pt) node [left] {$\bm{c}$};
    \draw [<->] (Gamma.center) -- (Gamma.north) node [right, midway] {$r'$};

    \path (Gamma) ++ (-45:0.9cm) node [disk, color=qbxcolor, minimum size=1cm](lexp) {};
    \path (lexp) ++ (20:0.5cm) node (y) {};
    \draw [fill] (lexp) circle (1pt) node [below] {$\bm{c'}$};
    \draw [fill] (y) circle (1pt) node [right] {$\bm{t}$};

    \path (Gamma) ++ (-4cm, 0) node [disk, color=srccolor, minimum size=1.5cm](gamma) {};
    \path (gamma) ++(155:0.75cm) coordinate (source);
    \draw [fill] (gamma) circle (1pt) node [below] {$\bm{0}$};
    \draw [fill] (source) circle (1pt) node [left] {$\bm{s}$};

    \draw [color=srccolor]
        (source)
        edge [bend left, dashed, ->]
        node [midway, above, yshift=0.5cm] {$\mpole{\bm{0}}{p}[\pot{\bm{s}}]$}
        (gamma.center);

    \draw [<->] (gamma.center) -- (gamma.north) node [right, midway] {$r$};

    \draw [<->] (gamma.center) -- (Gamma.west) node [above, midway] {$R$};

    \coordinate (Rpleft) at ([yshift=-2cm] gamma.east);
    \coordinate (Rpright) at ([yshift=-2cm] Gamma.center);
    \draw [dotted] (gamma.east) -- (Rpleft);
    \draw [dotted] (Gamma.center) -- (Rpright);
    \draw [<->] (Rpleft) -- (Rpright) node [below, midway] {$R'$};

    \draw[loosely dashed, color=srccolor]
         (gamma.center) ++ (-45:2.5cm) arc[start angle=-45, end angle=45, radius=2.5cm];

    \draw [color=localcolor]
        (gamma.center)
        edge [bend left=45, dashed, ->]
        node [maskedbg, midway, above]
        {$\local{\bm{c}}{p}[\mpole{\bm{0}}{p}[\pot{\bm{s}}]]$}
        (Gamma.center);

    \draw
        (source)
        edge [bend right=65, dashed, ->]
        node [maskedbg, midway, above, xshift=0.5cm] {$\local{\bm{c'}}{q}[\pot{\bm{s}}]$}
        (lexp.center);

    \draw [color=qbxcolor]
        (Gamma.center)
        edge[bend left, ->, dashed]
        node [maskedbg, right, xshift=-0.2cm, yshift=0.4cm]
        {$\local{\bm{c'}}{q}[\local{\bm{c}}{p}[\mpole{\bm{0}}{p}[\pot{\bm{s}}]]]$}
        (lexp.center);
  \end{tikzpicture}
  \caption{%
    Obtaining the local expansion of a point potential using intermediate
    multipole expansion and an intermediate local expansion. This provides the geometric setting for
    Theorem~\ref{thm:m2l2qbxl}.
  }
  \label{fig:m2l2qbxl}%
\end{figure}

\subsubsection{Bounds} We are now ready to state the main bounds in this section.

\begin{theorem}[{Source $\to$ Local($p$) $\to$ Local($q$), cf.~\cite[Hyp.~2]{wala:2019:gigaqbx3d}}]%
\label{thm:l2qbxl}%
Let $p, q \geq 0$ be integers, let $R > r > 0$, and let $\bm{s}, \bm{c}, \bm{t} \in \mathbb{R}^3$.
Assume $\norm{\bm{s}} \geq R$, $\norm{\bm{c}} \leq r$, and $\bm{t} \in \bar{B}(\bm{c}, r - \norm{\bm{c}})$.
Consider a translation sequence, depicted in Figure~\ref{fig:l2qbxl}, in which a $p$-th order local expansion of the potential due to $\bm{s}$ is formed at the origin and subsequently translated to a $q$-th order local expansion at $\bm{c}$, yielding $\local{\bm{c}}{q}[\local{\bm{0}}{p}[\pot{\bm{s}}]]$.
Then a bound for approximating
$\local{\bm{c}}{q}[\ptpot_{\bm{s}}]$ with
$\local{\bm{c}}{q}[\local{\bm{0}}{p}[\ptpot_{\bm{s}}]]$ is as follows:
\[
\norm{
  \local{\bm{c}}{q}[\ptpot_{\bm{s}}](\bm{t})
  -
  \local{\bm{c}}{q}[\local{\bm{0}}{p}[\ptpot_{\bm{s}}]](\bm{t})
  } \leq \frac{1}{R - r} \left(\frac{r}{R}\right)^{p+1}.
\]
\end{theorem}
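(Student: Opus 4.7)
The overall strategy is to exploit rotational symmetry to reduce to an axial source, then expand the point potential as a series of $R_n^0$'s, take the tail, apply $\local{\bm{c}}{q}$ termwise, and use Lemma~\ref{lem:rn0-local} plus a geometric series estimate.

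Concretely, I would first observe that since both the hypotheses ($\norm{\bm{s}} \geq R$, $\norm{\bm{c}} \leq r$, $\norm{\bm{t}-\bm{c}} \leq r - \norm{\bm{c}}$) and the desired conclusion depend only on the distances $\norm{\bm{s}}$, $\norm{\bm{c}}$, $\norm{\bm{t}-\bm{c}}$, and are rotation-invariant, we may rotate the coordinate system so that $\bm{s}$ lies on the positive $z$-axis, writing $S = \norm{\bm{s}} \geq R$. Using~\eqref{eqn:sph-harm-z-axis} and the addition theorem~\eqref{eqn:laplace-addition-thm}, the point potential has the uniformly convergent (on compact subsets of $B(\bm{0},S)$) representation
\[
\phi_{\bm{s}}(\bm{x}) = \sum_{n=0}^\infty S^{-(n+1)}\sqrt{\frac{4\pi}{2n+1}}\,R_n^0(\bm{x}),
\qquad \norm{\bm{x}} < S.
\]
By uniqueness of local coefficients (noted after Remark~\ref{rem:fl}), the $p$-th order local expansion at the origin is the partial sum through $n=p$, so
\[
\phi_{\bm{s}} - \local{\bm{0}}{p}[\phi_{\bm{s}}] = \sum_{n=p+1}^\infty S^{-(n+1)}\sqrt{\frac{4\pi}{2n+1}}\,R_n^0
\]
on $\bar{B}(\bm{0},r'')$ for any $r'' < S$, with uniform convergence.

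Next, I would apply $\local{\bm{c}}{q}$ to this identity. The ball $\bar{B}(\bm{c}, r - \norm{\bm{c}})$ is contained in $\bar{B}(\bm{0}, r)$, so the partial sums converge uniformly on a ball of radius slightly larger than $r - \norm{\bm{c}}$ about $\bm{c}$; Lemma~\ref{lem:locexp-uniform-limit} then permits interchanging $\local{\bm{c}}{q}$ with the infinite sum, yielding
\[
\local{\bm{c}}{q}[\phi_{\bm{s}}](\bm{t}) - \local{\bm{c}}{q}[\local{\bm{0}}{p}[\phi_{\bm{s}}]](\bm{t})
= \sum_{n=p+1}^\infty S^{-(n+1)}\sqrt{\frac{4\pi}{2n+1}}\,\local{\bm{c}}{q}[R_n^0](\bm{t}).
\]
Applying Lemma~\ref{lem:rn0-local}, each term is bounded in absolute value by $S^{-(n+1)}(\norm{\bm{c}}+\norm{\bm{t}-\bm{c}})^n$. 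Using the hypotheses $S \geq R$ and $\norm{\bm{c}} + \norm{\bm{t}-\bm{c}} \leq r$, the sum collapses to the geometric series
\[
\sum_{n=p+1}^\infty \frac{r^n}{R^{n+1}} = \frac{1}{R-r}\left(\frac{r}{R}\right)^{p+1},
\]
which is the claimed bound.

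The only real technicality is justifying the termwise application of $\local{\bm{c}}{q}$; everything else is a reduction to Lemma~\ref{lem:rn0-local} and a geometric sum. The axial reduction is what makes the $R_n^0$ lemma directly applicable without having to deal with the full $m$-sum of general $R_n^m$ terms.
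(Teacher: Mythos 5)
Your proposal is correct and follows essentially the same route as the paper's proof: rotate $\bm{s}$ onto the $z$-axis, expand $\phi_{\bm{s}}$ in $R_n^0$ via the addition theorem and~\eqref{eqn:sph-harm-z-axis}, interchange $\local{\bm{c}}{q}$ with the tail sum using uniform convergence and Lemma~\ref{lem:locexp-uniform-limit}, and bound termwise with Lemma~\ref{lem:rn0-local} followed by the geometric series. No gaps.
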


\begin{proof}
  Without loss of generality, by a rotation of the coordinate system we may assume
  $\bm{\hat{s}} = \transpose{(0,0,1)}$.
  The expansion of the potential due to $\bm{s}$, for $\norm{\bm{x}} < \norm{\bm{s}}$, is
  (cf.~\eqref{eqn:laplace-addition-thm-solid})
  \[
    \ptpot_{\bm{s}}(\bm{x}) = \sum_{n=0}^\infty
    \frac{4\pi}{2n+1} \sum_{m=-n}^n R_n^m(\bm{x})
    \overline{I_n^m(\bm{s})}.
  \]
  Since $\bm{\hat{s}} = \transpose{(0, 0, 1)}$, we can simplify
  using~\eqref{eqn:sph-harm-z-axis}, and obtain the local expansion
  at the origin as follows:
  \[
    \local{\bm{0}}{p}[\ptpot_{\bm{s}}](\bm{x}) = \sum_{n=0}^p
    \sqrt{\frac{4\pi}{2n+1}} \norm{\bm{s}}^{-(n+1)} R_n^0(\bm{x}).
  \]
  From linearity of $\local{\bm{c}}{q}[\cdot]$,
  \[
    \local{\bm{c}}{q}[\ptpot_{\bm{s}}](\bm{t})
    -
    \local{\bm{c}}{q}[\local{\bm{0}}{p}[\ptpot_{\bm{s}}]](\bm{t})
    =
    \local{\bm{c}}{q}\left[
      \sum_{n=p+1}^\infty \sqrt{\frac{4\pi}{2n+1}} \norm{\bm{s}}^{-(n+1)} R_n^0(\cdot)
      \right](\bm{t}).
  \]
  In the right-hand side of the above expression, we can interchange the order of summation and expansion,
  as the summation converges uniformly in $n$ on $\bar{B}(\bm{c}, r)$ (cf.~Lemmas~\ref{lem:locexp-uniform-convergence} and~\ref{lem:locexp-uniform-limit}). Therefore,
  \begin{equation}
    \label{eqn:accel-error-series}
    \local{\bm{c}}{q}[\ptpot_{\bm{s}}](\bm{t})
    -
    \local{\bm{c}}{q}[\local{\bm{0}}{p}[\ptpot_{\bm{s}}]](\bm{t})
    =
    \sum_{n=p+1}^\infty \sqrt{\frac{4 \pi}{2n+1}} \norm{\bm{s}}^{-(n+1)} \local{\bm{c}}{q}[R_n^0](\bm{t}).
  \end{equation}
  From Lemma~\ref{lem:rn0-local},
\begin{equation}
  \label{eqn:rn0-bound}
  \norm{\local{\bm{c}}{q}[R_n^0](\bm{t})}
  \leq
  \sqrt{\frac{2n+1}{4\pi}} (\norm{\bm{c}} + \norm{\bm{t} - \bm{c}})^n
  \leq
  \sqrt{\frac{2n+1}{4\pi}} r^n.
\end{equation}
Using~\eqref{eqn:accel-error-series} and~\eqref{eqn:rn0-bound},
\begin{align*}
\norm{\local{\bm{c}}{q}[\ptpot_{\bm{s}}](\bm{t}) - \local{\bm{c}}{q}[\local{\bm{0}}{p}[\ptpot_{\bm{s}}]](\bm{t})}
&=
\norm{ \sum_{n=p+1}^\infty \sqrt{\frac{4 \pi}{2n+1}} \norm{\bm{s}}^{-(n+1)}
 \local{\bm{c}}{q}[R_n^0](\bm{t})}  \\
&\leq
 \sum_{n=p+1}^\infty r^n \norm{\bm{s}}^{-(n+1)}  \nonumber \\
&\leq \frac{1}{R} \sum_{n=p+1}^\infty \left(\frac{r}{R}\right)^n \nonumber \\
&= \frac{1}{R-r} \left(\frac{r}{R}\right)^{p+1} \nonumber.
\end{align*}

\end{proof}

\begin{theorem}[{(Source $\to$ Multipole($p$) $\to$ Local($q$), cf.~\cite[Hyp.~1]{wala:2019:gigaqbx3d}}]%
\label{thm:m2qbxl}%
Let $p, q \geq 0$ be integers, let $R > r > 0$, and let $\bm{s}, \bm{c}, \bm{t} \in \mathbb{R}^3$.
Suppose that $\norm{\bm{s}} \leq r$, $\norm{\bm{c}} \geq R$, and $t \in \bar{B}(\bm{c}, \norm{\bm{c}}-R)$.
Consider a sequence of translations, depicted in Figure~\ref{fig:m2qbxl}, in which a $p$-th order multipole expansion of the potential due to $\bm{s}$ is formed at the origin and subsequently translated to $q$-th order local expansion at $\bm{c}$, yielding $\local{\bm{c}}{q}[\mpole{\bm{0}}{p}[\pot{\bm{s}}]]$.
Then
a bound for approximating
$\local{\bm{c}}{q}[\ptpot_{\bm{s}}]$ with
$\local{\bm{c}}{q}[\mpole{\bm{0}}{p}[\ptpot_{\bm{s}}]]$ is as follows:
\[
\norm{
  \local{\bm{c}}{q}[\ptpot_{\bm{s}}](\bm{t})
  -
  \local{\bm{c}}{q}[\mpole{\bm{0}}{p}[\ptpot_{\bm{s}}]](\bm{t})
  } \leq \frac{1}{R - r} \left(\frac{r}{R}\right)^{p+1}.
\]
\end{theorem}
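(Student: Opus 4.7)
The plan is to mirror the structure of the proof of Theorem~\ref{thm:l2qbxl}, but with the roles of the regular and irregular solid harmonics swapped because the intermediate object here is a multipole (not local) expansion. First, by rotating coordinates, I would assume $\bm{\hat{s}} = \transpose{(0,0,1)}$. Using the addition theorem~\eqref{eqn:laplace-addition-thm-solid} in the form valid for $\norm{\bm{x}} > \norm{\bm{s}}$ (i.e.\ with $\bm{x}$ and $\bm{s}$ exchanged) together with~\eqref{eqn:sph-harm-z-axis} to kill the $m \neq 0$ terms, the multipole expansion collapses to
\[
  \mpole{\bm{0}}{p}[\ptpot_{\bm{s}}](\bm{x}) = \sum_{n=0}^p \sqrt{\frac{4\pi}{2n+1}}\, \norm{\bm{s}}^n I_n^0(\bm{x}),
\]
and the difference $\ptpot_{\bm{s}} - \mpole{\bm{0}}{p}[\ptpot_{\bm{s}}]$ is the corresponding tail starting at $n = p+1$.

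Next, I would apply $\local{\bm{c}}{q}[\cdot]$ to this difference and push it inside the sum. For this I need to pick a radius $\rho$ for the local expansion at $\bm{c}$ on which the function being expanded is harmonic (inside), continuous (on the closure), and on which the multipole tail converges uniformly. The natural choice is $\rho = \norm{\bm{c}} - R$, since every point of $\bar{B}(\bm{c}, \norm{\bm{c}}-R)$ lies at distance at least $R > r \geq \norm{\bm{s}}$ from the origin, making $\ptpot_{\bm{s}}$ harmonic and continuous there, and making the multipole series converge uniformly (as it converges uniformly on any compact set bounded away from the sphere $\norm{\bm{x}} = \norm{\bm{s}}$). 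With that choice, Lemmas~\ref{lem:locexp-uniform-convergence} and~\ref{lem:locexp-uniform-limit} (applied via Remark~\ref{rem:multipole-analogue} to handle the multipole series) justify exchanging the local expansion with the summation, yielding
\[
  \local{\bm{c}}{q}[\ptpot_{\bm{s}}](\bm{t}) - \local{\bm{c}}{q}[\mpole{\bm{0}}{p}[\ptpot_{\bm{s}}]](\bm{t})
  = \sum_{n=p+1}^\infty \sqrt{\frac{4\pi}{2n+1}}\, \norm{\bm{s}}^n \local{\bm{c}}{q}[I_n^0](\bm{t}).
\]

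To finish, I would estimate each term with Lemma~\ref{lem:in0-local}, whose hypothesis $\norm{\bm{t}-\bm{c}} < \norm{\bm{c}}$ is satisfied since $\norm{\bm{t}-\bm{c}} \leq \norm{\bm{c}} - R < \norm{\bm{c}}$. The lemma gives
\[
  \bignorm{\local{\bm{c}}{q}[I_n^0](\bm{t})} \leq \sqrt{\frac{2n+1}{4\pi}}\, \bigl(\norm{\bm{c}} - \norm{\bm{t}-\bm{c}}\bigr)^{-(n+1)} \leq \sqrt{\frac{2n+1}{4\pi}}\, R^{-(n+1)}.
\]
The $\sqrt{(2n+1)/(4\pi)}$ factors cancel against the $\sqrt{4\pi/(2n+1)}$ factors from the expansion, and bounding $\norm{\bm{s}}^n \leq r^n$ reduces the tail to the geometric series $\sum_{n=p+1}^\infty r^n R^{-(n+1)} = (R-r)^{-1} (r/R)^{p+1}$, which is the claimed bound.

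The main obstacle I anticipate is the justification of the summation/expansion interchange: one has to ensure uniform convergence on a closed ball where the expanded function is harmonic inside and continuous on the closure, and the boundary case $\norm{\bm{s}} = r$ makes the naive choice $\rho = \norm{\bm{c}} - r$ problematic (the singularity can land on the boundary). Choosing $\rho = \norm{\bm{c}} - R$, which is strictly smaller thanks to $R > r$, cleanly avoids this issue and is still large enough to contain the target $\bm{t}$.
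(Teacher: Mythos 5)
Your proposal is correct and follows essentially the same route as the paper's own proof: rotate so the source lies on the $z$-axis, collapse the multipole expansion to the $m=0$ terms, express the error as the local expansion of the tail $\sum_{n=p+1}^\infty \sqrt{4\pi/(2n+1)}\,\norm{\bm{s}}^n I_n^0$, interchange summation and expansion by uniform convergence, and bound termwise with Lemma~\ref{lem:in0-local} before summing the geometric series. Your choice of the radius $\norm{\bm{c}}-R$ for the interchange is a harmless variant of the paper's use of $\bar{B}(\bm{c},\norm{\bm{t}-\bm{c}})$ inside the region $\mathbb{R}^3\setminus B(\bm{0},r')$, $r<r'<R$, and handles the boundary case $\norm{\bm{s}}=r$ cleanly.
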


\begin{proof}
  Proceeding as with the proof of Theorem~\ref{thm:l2qbxl}, without loss of generality assume
  $\bm{\hat{s}} = \transpose{(0,0,1)}$.
  We write the expansion of the potential due to $\bm{s}$, with $\norm{\bm{x}} > \norm{\bm{s}}$,
  as (cf.~\eqref{eqn:laplace-addition-thm-solid})
  \[
    \ptpot_{\bm{s}}(\bm{x}) = \sum_{n=0}^\infty
    \frac{4\pi}{2n+1} \sum_{m=-n}^n \overline{R_n^m(\bm{s})}
    I^m_n(\bm{x}),
  \]
  observing we may take the conjugate of the inner summation in~\eqref{eqn:laplace-addition-thm-solid} since the value of the inner summation is real.
  Simplifying using~\eqref{eqn:sph-harm-z-axis}, we obtain the
  multipole expansion at the origin:
  \begin{equation}
  \label{eqn:m2qbxl-multipole}
  \mpole{\bm{0}}{p}[\ptpot_{\bm{s}}](\bm{x})
  = \sum_{n=0}^p \sqrt{\frac{4\pi}{2n+1}} \norm{\bm{s}}^n I_n^0(\bm{x}).
  \end{equation}
  Via linearity of $\local{\bm{c}}{q}[\cdot]$,
  \[
    \local{\bm{c}}{q}[\ptpot_{\bm{s}}](\bm{t})
    -
    \local{\bm{c}}{q}[\mpole{\bm{0}}{p}[\ptpot_{\bm{s}}]](\bm{t})
    =
    \local{\bm{c}}{q}\left[
      \sum_{n=p+1}^\infty \sqrt{\frac{4\pi}{2n+1}} \norm{\bm{s}}^{n} I_n^0(\cdot)
      \right](\bm{t}).
  \]
  Since the series for the right-hand side can be shown to converge uniformly on $\mathbb{R}^3 \setminus B(\bm{0}, r')$ for all $r' > r$ (cf.~Remark~\ref{rem:multipole-analogue}), in particular on $\bar{B}(\bm{c}, \norm{\bm{t}-\bm{c}})$, we may interchange the order of expansion and summation (cf.~Lemma~\ref{lem:locexp-uniform-limit}), obtaining
  \[
    \local{\bm{c}}{q}[\ptpot_{\bm{s}}](\bm{t})
    -
    \local{\bm{c}}{q}[\mpole{\bm{0}}{p}[\ptpot_{\bm{s}}]](\bm{t})
    =
      \sum_{n=p+1}^\infty \sqrt{\frac{4\pi}{2n+1}} \norm{\bm{s}}^{n}
    \local{\bm{c}}{q}[I_n^0](\bm{t}).
  \]
  Using Lemma~\ref{lem:in0-local},
  \[
      \norm{\local{\bm{c}}{q}[I_n^0](\bm{t})}
      \leq \sqrt{\frac{2n+1}{4\pi}} \left(\norm{\bm{c}} - \norm{\bm{t} - \bm{c}}\right)^{-(n+1)}
      \leq
      \sqrt{\frac{2n+1}{4\pi}} R^{-(n+1)}.
  \]
Therefore,
\begin{align*}
    \norm{
    \local{\bm{c}}{q}[\ptpot_{\bm{s}}](\bm{t})
    -
    \local{\bm{c}}{q}[\mpole{\bm{0}}{p}[\ptpot_{\bm{s}}]](\bm{t})
    }
    &= \norm{\sum_{n=p+1}^\infty \sqrt{\frac{4\pi}{2n+1}} \norm{\bm{s}}^n
    \local{\bm{c}}{q}[I_n^0](\bm{t})
    } \nonumber \\
    &\leq
    \sum_{n=p+1}^\infty
    \sqrt{\frac{4\pi}{2n+1}} \norm{\bm{s}}^n
    \norm{\local{\bm{c}}{q}[I_n^0](\bm{t})} \nonumber \\
    &\leq \sum_{n=p+1}^\infty \norm{\bm{s}}^n R^{-(n+1)} \nonumber \\
    &\leq \frac{1}{R-r} \left(\frac{r}{R}\right)^{p+1}.
\end{align*}
\end{proof}

\begin{theorem}[{Source $\to$ Multipole($p$) $\to$ Local($p$) $\to$ Local($q$), cf.~\cite[Hyp.~3]{wala:2019:gigaqbx3d}}]%
\label{thm:m2l2qbxl}%
Let $R > r > 0$ and $R' > r' > 0$.
Let $p, q \geq 0$ be integers.
Let $\bm{s}, \bm{c}, \bm{c'}, \bm{t} \in \mathbb{R}^3$.
Furthermore assume $\norm{\bm{s}} \leq r$, $\norm{\bm{c}} = R' + r = R + r'$, and $\bm{t} \in \bar{B}(\bm{c'}, r' - \norm{\bm{c} - \bm{c'}}) \subseteq \bar{B}(\bm{c}, r')$. Consider the following translation sequence depicted in Figure~\ref{fig:m2l2qbxl}.
First,  a $p$-th order multipole expansion
is formed at the origin of the potential due to~$\bm{s}$.
Second, this is translated to a $p$-th order local expansion centered at $\bm{c}$.
Last, this is translated to a $q$-th order local expansion centered at~$\bm{c'}$, yielding $\local{\bm{c'}}{q}[\local{\bm{c}}{p}[\mpole{\bm{0}}{p}[\phi_{\bm{s}}]]]$.
An error bound for approximating the $q$-th order local expansion $\local{\bm{c'}}{q}[\phi_{\bm{s}}](\bm{t})$ using
$\local{\bm{c'}}{q}[\local{\bm{c}}{p}[\mpole{\bm{0}}{p}[\phi_{\bm{s}}]]](\bm{t})$ is as follows:
\begin{equation}
\label{eqn:m2l-bound}
\norm{\local{\bm{c'}}{q}[\phi_{\bm{s}}](\bm{t}) - \local{\bm{c'}}{q}[\local{\bm{c}}{p}[\mpole{\bm{0}}{p}[\phi_{\bm{s}}]]](\bm{t})} \leq \frac{1}{R - r} \left(\frac{r}{R}\right)^{p+1} + \frac{1}{R' - r'}\left(\frac{r'}{R'}\right)^{p+1}.
\end{equation}
\end{theorem}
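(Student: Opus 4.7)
The plan is to split the error using the triangle inequality with the intermediate $\local{\bm{c'}}{q}[\local{\bm{c}}{p}[\phi_{\bm{s}}]](\bm{t})$, writing $E \leq E_1 + E_2$ where
\[
E_1 = \bignorm{\local{\bm{c'}}{q}[\phi_{\bm{s}}](\bm{t}) - \local{\bm{c'}}{q}[\local{\bm{c}}{p}[\phi_{\bm{s}}]](\bm{t})}, \qquad
E_2 = \bignorm{\local{\bm{c'}}{q}[\local{\bm{c}}{p}[\phi_{\bm{s}} - \mpole{\bm{0}}{p}[\phi_{\bm{s}}]]](\bm{t})},
\]
the second expression using linearity of both expansion operators. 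This isolates the pure L2L error (at centers $\bm{c} \to \bm{c'}$) from the M2L error at $\bm{c}$ carried through the outer L2L at $\bm{c'}$, so that $E_1$ should contribute the $(r'/R')^{p+1}$ term and $E_2$ the $(r/R)^{p+1}$ term of the final bound.

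To bound $E_1$, I would shift the coordinate origin to $\bm{c}$. In the shifted frame, the source $\bm{s} - \bm{c}$ satisfies $\norm{\bm{s} - \bm{c}} \geq \norm{\bm{c}} - \norm{\bm{s}} \geq (R' + r) - r = R'$, the outer center $\bm{c'} - \bm{c}$ satisfies $\norm{\bm{c'} - \bm{c}} \leq r'$, and $\bm{t} - \bm{c}$ lies in $\bar{B}(\bm{c'} - \bm{c}, r' - \norm{\bm{c'} - \bm{c}})$ by hypothesis. These are precisely the geometric hypotheses of Theorem~\ref{thm:l2qbxl} with $R_{\text{new}} = R'$ and $r_{\text{new}} = r'$, yielding $E_1 \leq (R' - r')^{-1} (r'/R')^{p+1}$.

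For $E_2$, I would rotate coordinates so that $\bm{\hat{s}} = \transpose{(0, 0, 1)}$ and use the identity $\phi_{\bm{s}} - \mpole{\bm{0}}{p}[\phi_{\bm{s}}] = \sum_{n=p+1}^\infty \sqrt{4\pi/(2n+1)}\,\norm{\bm{s}}^n I_n^0$ developed in the proof of Theorem~\ref{thm:m2qbxl}. Since $\norm{\bm{s}} \leq r < R \leq \norm{\bm{x}}$ for $\bm{x} \in \bar{B}(\bm{c}, r')$, this series converges uniformly there; applying Lemmas~\ref{lem:locexp-uniform-convergence} and~\ref{lem:locexp-uniform-limit} to commute the series with both local expansion operators reduces $E_2$ to
\[
E_2 \leq \sum_{n=p+1}^\infty \sqrt{\frac{4\pi}{2n+1}}\,\norm{\bm{s}}^n \bignorm{\local{\bm{c'}}{q}[\local{\bm{c}}{p}[I_n^0]](\bm{t})}.
\]

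The principal obstacle is the term-by-term estimate
\[
\bignorm{\local{\bm{c'}}{q}[\local{\bm{c}}{p}[I_n^0]](\bm{t})} \leq \sqrt{(2n+1)/(4\pi)}\,R^{-(n+1)},
\]
which mirrors the single-expansion bound used in Theorem~\ref{thm:m2qbxl}. When $q \geq p$, Lemma~\ref{lem:translation-operator-idempotence}(a) collapses the composition to $\local{\bm{c}}{p}[I_n^0](\bm{t})$, and the estimate follows from Lemma~\ref{lem:in0-local} combined with $\norm{\bm{c}} - \norm{\bm{t} - \bm{c}} \geq \norm{\bm{c}} - r' = R$. For general $p, q$, I expect to use the explicit decomposition $\local{\bm{c}}{p}[I_n^0] = \sum_{\nu=0}^p S^I_{n,\nu}(\bm{c}, \cdot)$ from the proof of Lemma~\ref{lem:in0-local}, re-expand each harmonic polynomial $S^I_{n,\nu}(\bm{c}, \cdot)$ at $\bm{c'}$ via~\eqref{eqn:rnm-addition-thm}, and rebound the resulting doubly-indexed sum through the Cauchy--Schwarz argument of Lemmas~\ref{lem:rn0-local}~and~\ref{lem:in0-local}; the geometric identity $\norm{\bm{c'}} - \norm{\bm{t}-\bm{c'}} \geq R$ (which follows from $\norm{\bm{c}} = R + r'$ together with $\bm{t} \in \bar{B}(\bm{c'}, r' - \norm{\bm{c} - \bm{c'}})$) ensures the denominator $R^{n+1}$ survives. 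Once the term-by-term bound is established, the geometric sum $\sum_{n=p+1}^\infty r^n R^{-(n+1)} = (R-r)^{-1}(r/R)^{p+1}$ completes the bound on $E_2$, and adding the two contributions produces~\eqref{eqn:m2l-bound}.
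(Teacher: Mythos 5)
Your decomposition is genuinely different from the paper's. The paper adds and subtracts $\local{\bm{c'}}{q}[\mpole{\bm{0}}{p}[\phi_{\bm{s}}]](\bm{t})$, so its first piece is exactly the Source $\to$ Multipole($p$) $\to$ Local($q$) error (dispatched by Theorem~\ref{thm:m2qbxl}, contributing the $(r/R)^{p+1}$ term), and its hard second piece is $\local{\bm{c'}}{q}[\mpole{\bm{0}}{p}[\phi_{\bm{s}}] - \local{\bm{c}}{p}[\mpole{\bm{0}}{p}[\phi_{\bm{s}}]]]$, i.e.\ the tail $\sum_{\nu > p}\local{\bm{c'}}{q}[S^I_{n,\nu}(\bm{c},\cdot)]$ summed over $n \le p$. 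You instead insert $\local{\bm{c'}}{q}[\local{\bm{c}}{p}[\phi_{\bm{s}}]](\bm{t})$, so your $E_1$ is a pure L($p$)$\to$L($q$) error that reduces, after recentering at $\bm{c}$, to Theorem~\ref{thm:l2qbxl} with $(R',r')$ --- this part is correct, clean, and arguably more symmetric than the paper's route, since it reuses an already-proved theorem for the $(r'/R')^{p+1}$ term rather than the $(r/R)^{p+1}$ term. Your $E_2$ then carries the multipole truncation tail through both local expansions, requiring $\bignorm{\local{\bm{c'}}{q}[\local{\bm{c}}{p}[I_n^0]](\bm{t})} \le \sqrt{(2n+1)/(4\pi)}\,R^{-(n+1)}$ for $n > p$; writing $\local{\bm{c}}{p}[I_n^0] = \sum_{\nu=0}^{p} S^I_{n,\nu}(\bm{c},\cdot)$ and using the estimate the paper establishes for $\norm{\local{\bm{c'}}{q}[S^I_{n,\nu}(\bm{c},\cdot)](\bm{t})}$, namely $\sqrt{(2n+1)/(4\pi)}\binom{n+\nu}{n}\norm{\bm{c}}^{-(n+\nu+1)}(r')^{\nu}$, the negative binomial series indeed closes your sum. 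So the plan is sound and the bookkeeping works out.

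The gap is in how you propose to prove that term-by-term estimate for general $q < p$. You write that you will ``rebound the resulting doubly-indexed sum through the Cauchy--Schwarz argument of Lemmas~\ref{lem:rn0-local} and~\ref{lem:in0-local},'' but those lemmas exploit a special structure that is absent here: in both, the source direction is rotated onto the $z$-axis so that only order-zero harmonics appear on one side, and the $\mu$-sum collapses via~\eqref{eqn:sph-harm-z-axis} before Cauchy--Schwarz is applied. In your $E_2$ (and in the paper's hard piece), one must re-expand $R_{\nu}^{-\mu}(\cdot-\bm{c})$ about $\bm{c'}$ for \emph{all} orders $\mu \in I(0,\nu)$, and $\bm{c'}-\bm{c}$ is not aligned with the axis already used to simplify $I_{n+\nu}^{\mu}(\bm{c})$. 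A naive Cauchy--Schwarz over the resulting triple sum does not preserve the sharp constant. The paper's resolution is to introduce a second rotation $Z$ aligning $\bm{c'}-\bm{c}$ with the $z$-axis, expand each $Y_{\nu}^{\mu}$ in the rotated basis via the unitary matrix $C_{\nu}$ of~\eqref{eqn:rotated-sph-harm-expansion}, use~\eqref{eqn:sph-harm-z-axis} to collapse the inner index in~\eqref{eqn:subexpansion-simplified}, and then invoke unitarity of $C_{\nu}$ (plus Lemma~\ref{prop:binom-product-inequality}) to show the $\ell^2$ sums feeding Cauchy--Schwarz are unchanged, as in~\eqref{eqn:betamnnp-sum-sq-part2}. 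That rotation-and-unitarity device is the essential new idea of this proof, and your sketch does not identify it; without it the ``principal obstacle'' you name remains open. Your $q \ge p$ shortcut via Lemma~\ref{lem:translation-operator-idempotence} is fine but does not cover the regime $q < p$ that actually occurs in GIGAQBX.
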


\begin{proof}

By adding and subtracting $\local{\bm{c'}}{q}[\mpole{\bm{0}}{p}[\phi_{\bm{s}}]](\bm{t})$ to the left-hand side of~\eqref{eqn:m2l-bound}, we obtain
\begin{multline}
\label{eqn:m2l-triangle-bound}
\norm{\local{\bm{c'}}{q}[\phi_{\bm{s}}](\bm{t}) - \local{\bm{c'}}{q}[\local{\bm{c}}{p}[\mpole{\bm{0}}{p}[\phi_{\bm{s}}]]](\bm{t})}
\leq \norm{\local{\bm{c'}}{q}[\phi_{\bm{s}}](\bm{t})
-
\local{\bm{c'}}{q}[\mpole{\bm{0}}{p}[\phi_{\bm{s}}]](\bm{t})}\\
+
\norm{
\local{\bm{c'}}{q}[\mpole{\bm{0}}{p}[\phi_{\bm{s}}]](\bm{t})
-
\local{\bm{c'}}{q}[\local{\bm{c}}{p}[\mpole{\bm{0}}{p}[\phi_{\bm{s}}]]](\bm{t})
}.
\end{multline}

By Theorem~\ref{thm:m2qbxl},
\begin{equation}
\label{eqn:m2l-triangle-bound-rhs-part1}
\norm{\local{\bm{c'}}{q}[\phi_{\bm{s}}](\bm{t})
-
\local{\bm{c'}}{q}[\mpole{\bm{0}}{p}[\phi_{\bm{s}}]](\bm{t})}
\leq \frac{1}{R-r} \left(\frac{r}{R}\right)^{p+1}.
\end{equation}
In the remainder of this proof, we  show
\begin{equation}
\label{eqn:m2l-triangle-bound-rhs-part2}
\norm{
\local{\bm{c'}}{q}[\mpole{\bm{0}}{p}[\phi_{\bm{s}}]](\bm{t})
-
\local{\bm{c'}}{q}[\local{\bm{c}}{p}[\mpole{\bm{0}}{p}[\phi_{\bm{s}}]]](\bm{t})
} \leq \frac{1}{R'-r'} \left(\frac{r'}{R'}\right)^{p+1},
\end{equation}
from which~\eqref{eqn:m2l-bound} follows via~\eqref{eqn:m2l-triangle-bound}.

We now state a number of facts concerning $\mpole{\bm{0}}{p}[\phi_{\bm{s}}]$. Without loss of generality, in the remainder of the proof we assume $\bm{\hat{s}} = \transpose{(0, 0, 1)}$. Recall that
(cf.~\eqref{eqn:m2qbxl-multipole} in Theorem~\ref{thm:m2qbxl})
\begin{equation}
\label{eqn:mpole-in0}
   \mpole{\bm{0}}{p}[\phi_{\bm{s}}](\bm{x})
   =
   \sum_{n=0}^p \sqrt{\frac{4\pi}{2n+1}} \norm{\bm{s}}^n I_n^0(\bm{x}).
\end{equation}
Recall also the local expansion of $I_n^0$ from Lemma~\ref{lem:in0-local},
\[
\local{\bm{c}}{p}[I_n^0](\bm{t})
= \sum_{\nu=0}^{p} S^I_{n,\nu}(\bm{c}, \bm{t}),
\]
where the terms $S^I_{n,\nu}(\bm{c}, \bm{t})$ are
\[
S^I_{n,\nu}(\bm{c}, \bm{t})
=
A_n^0
\sum_{\mu \in I(0,\nu)}
(-1)^{\nu-\mu}
\binom{n+\nu-\mu}{n}
\frac{I^\mu_{n+\nu}(\bm{c}) R_{\nu}^{-\mu}(\bm{t} -\bm{c})}{A^{\mu}_{n+\nu} A^{-\mu}_\nu},
\quad \nu \in \mathbb{N}_0.
\]

Using~\eqref{eqn:mpole-in0} and via the linearity
of $\local{\bm{c'}}{q}[\cdot]$ and $\local{\bm{c}}{p}[\cdot]$,
\begin{equation}
\label{eqn:m2l-triangle-bound-rhs-part2-expanded}
\local{\bm{c'}}{q}[\mpole{\bm{0}}{p}[\phi_{\bm{s}}]](\bm{t})
-
\local{\bm{c'}}{q}[\local{\bm{c}}{p}[\mpole{\bm{0}}{p}[\phi_{\bm{s}}]]](\bm{t})
=
\sum_{n=0}^p \sqrt{\frac{4\pi}{2n+1}} \norm{\bm{s}}^n
\local{\bm{c'}}{q}[I_n^0 - \local{\bm{c}}{p}[I_n^0]](\bm{t}).
\end{equation}
Observe
\begin{equation}
\label{eqn:in0-expansion}
I_n^0(\bm{t}) - \local{\bm{c}}{p}[I_n^0](\bm{t})
=
\sum_{\nu=p+1}^\infty S^I_{n,\nu}(\bm{c},\bm{t}).
\end{equation}
Applying $\local{\bm{c'}}{q}$ to the right-hand side of~\eqref{eqn:in0-expansion} and interchanging summation and expansion, which is possible due to uniform convergence of the summation, results in
\begin{equation}
\label{eqn:local-expansion-of-in0-error}
\local{\bm{c'}}{q}[I_n^0 - \local{\bm{c}}{p}[I_n^0]](\bm{t})
=
\sum_{\nu=p+1}^\infty
\local{\bm{c'}}{q}\left[S^I_{n,\nu}(\bm{c}, \cdot) \right](\bm{t}).
\end{equation}

If $\bm{c} \neq \bm{c'}$, let $Z$ be a rotation matrix that rotates the vector $\bm{c'} - \bm{c}$ parallel to the $z$-axis, i.e., such that $Z(\widehat{\bm{c'}-\bm{c}}) = \transpose{(0, 0, 1)}$. If $\bm{c} = \bm{c'}$, for definiteness let $Z$ be the identity matrix. For $\nu \in \mathbb{N}_0$, it can be shown that the rotated spherical harmonics $\{ Y_\nu^\mu(Z \cdot) : \mu \in \mathbb{N}_0, -\nu \leq \mu \leq \nu \}$ form an orthonormal basis of $\mathbb{Y}_{\nu}^3$ (\cite[Ch.~2]{atkinson:2012:sph-harm}). We define a $(2\nu + 1) \times (2\nu + 1)$ matrix $C_{\nu} = \{c_\nu^{\mu,\lambda}\}$ relating the basis sets $\{ Y_\nu^\mu(Z \cdot) \}$ and $\{ Y_{\nu}^{\mu}\}$. The rows and columns of $C_{\nu}$ are indexed by $-\nu, -\nu + 1, \ldots, \nu$. Note the negative starting index. The entry at row $\mu$, column $\lambda$ satisfies
\[
    c_{\nu}^{\mu, \lambda} = \innerprod{Y_{\nu}^{\mu}}{Y_{\nu}^{\lambda}(Z\cdot)}
\]
so that, for all $\bm{\xi} \in \mathbb{S}^2$,
\begin{equation}
  \label{eqn:rotated-sph-harm-expansion}
   Y_{\nu}^{\mu}(\bm{\xi})
   = \sum_{\lambda=-\nu}^{\nu} c_{\nu}^{\mu,\lambda} Y_{\nu}^{\lambda}(Z \bm{\xi}).
\end{equation}
Then $C_{\nu}$ is a unitary matrix (cf.~\cite[eq.~(2.26)]{atkinson:2012:sph-harm}), i.e., $\htranspose{C_{\nu}}C_{\nu} = I$, where $\htranspose{(\cdot)}$ is the conjugate transpose.

In the next step, we expand $\local{\bm{c'}}{q}[S^I_{n,\nu}(\bm{c}, \cdot)]$ in the rotated basis. Using the addition theorem for solid harmonics~\eqref{eqn:rnm-addition-thm} and~\eqref{eqn:rotated-sph-harm-expansion},
\begin{align}
\local{\bm{c'}}{q}[R_{\nu}^{-\mu}(\cdot - \bm{c})](\bm{t})
&=
\local{\bm{c'}}{q}
\left[
\sum_{\lambda=-\nu}^{\nu}
 c_{\nu}^{-\mu,\lambda} R_{\nu}^{\lambda}(Z(\cdot - \bm{c}))
\right](\bm{t})\nonumber \\
& =
\label{eqn:subexpansion-unsimplified}
\sum_{\nu'=0}^{\min(\nu,q)}
\sum_{\lambda=-\nu}^{\nu}
A_{\nu}^{\lambda}
c_{\nu}^{-\mu,\lambda}
\\&\quad \quad \sum_{\ilocloc{\lambda'}{\nu'}{\lambda}{\nu-\nu'}}
\binom{\nu+\lambda}{\nu'+\lambda'}
\frac{
R^{\lambda-\lambda'}_{\nu-\nu'}(Z(\bm{c'}-\bm{c}))
R^{\lambda'}_{\nu'}(Z(\bm{t} - \bm{c'}))
}{
A^{\lambda-\lambda'}_{\nu-\nu'}
A^{\lambda'}_{\nu'}
} \nonumber\\
&=
\label{eqn:subexpansion-simplified}
\sum_{\nu'=0}^{\min(\nu,q)}
\sum_{\lambda=-\nu}^{\nu}
A_{\nu}^{\lambda}
c_{\nu}^{-\mu,\lambda}
\norm{\bm{c'}-\bm{c}}^{\nu-\nu'}
\sum_{\mathclap{\substack{\lambda' \in I(0, \nu') \\
\cap \{\lambda\}}}}
\hspace{0.5em}
\binom{\nu+\lambda}{\nu'+\lambda'}
\frac{
R^{\lambda'}_{\nu'}(Z(\bm{t} - \bm{c'}))
}{
A^{\lambda'}_{\nu'}
}\\
&=
\sum_{\nu'=0}^{\min(\nu, q)}
\sum_{\lambda=-\nu'}^{\nu'}
A^{\lambda}_{\nu}
c_{\nu}^{-\mu,\lambda}
\norm{\bm{c'}-\bm{c}}^{\nu-\nu'}
\binom{\nu+\lambda}{\nu'+\lambda}
\frac{R^{\lambda}_{\nu'}(Z(\bm{t}-\bm{c'}))}{
A^{\lambda}_{\nu'}}\nonumber.
\end{align}
To go from~\eqref{eqn:subexpansion-unsimplified} to~\eqref{eqn:subexpansion-simplified}, notice the inner summation is zero unless $\lambda'=\lambda$ [cf.~\eqref{eqn:sph-harm-z-axis}].)
(If $\bm{c}=\bm{c'}$, additionally it is zero unless $\nu=\nu'$.) It follows that we can write $\local{\bm{c'}}{q}[S^I_{n,\nu}(\bm{c}, \cdot)](\bm{t})$ as
\begin{align*}
\local{\bm{c'}}{q}[S^I_{n,\nu}(\bm{c}, \cdot)](\bm{t}) &=
A_n^0 \sum_{\nu'=0}^{\min(\nu, q)}
\sum_{\mu=-\nu}^{\nu}
(-1)^{\nu-\mu}
\binom{n+\nu-\mu}{n}
\frac{I^{\mu}_{n+\nu}(\bm{c})}{
A^{\mu}_{n+\nu} A^{-\mu}_{\nu}} \\
&\quad \quad
\sum_{\lambda=-\nu'}^{\nu'}
A^{\lambda}_{\nu}
c_{\nu}^{-\mu,\lambda}
\norm{\bm{c'}-\bm{c}}^{\nu-\nu'}
\binom{\nu+\lambda}{\nu'+\lambda}
\frac{R^{\lambda}_{\nu'}(Z(\bm{t}-\bm{c'}))}{
A^{\lambda}_{\nu'}}.
\end{align*}

Expanding, rearranging, and simplifying the previous equation,
we obtain
\begin{equation}
\label{eqn:snnu-expansion-simplified}
\local{\bm{c'}}{q}[S^I_{n,\nu}(\bm{c}, \cdot)](\bm{t})
= A_n^0 \sum_{\nu'=0}^{\min(\nu, q)}
\sum_{\mu=-\nu}^{\nu}
(-1)^{\nu-\mu}
\alpha^{\mu}_{\nu,\nu'}(\bm{c})
\beta_{\nu,\nu'}^{\mu}(\bm{c'}-\bm{c},Z(\bm{t}-\bm{c'}))
\end{equation}
where
\begin{align*}
    \alpha_{\nu,\nu'}^{\mu}(\bm{x}) &=
    \sqrt{\frac{4\pi}{2(n+\nu)+1}}
    \sqrt{\binom{n+\nu-\mu}{n} \binom{n+\nu+\mu}{n}}
    \norm{\bm{x}}^{-(n+\nu+1)}
    Y_{n+\nu}^{\mu}(\hat{\bm{x}}),\\
    \beta_{\nu,\nu'}^{\mu}(\bm{x},\bm{y}) &=
    \sqrt{\frac{4\pi}{2\nu'+1}}
    \norm{\bm{x}}^{\nu-\nu'}
    \norm{\bm{y}}^{\nu'}
    \sum_{\lambda=-\nu'}^{\nu'}
    c_\nu^{-\mu,\lambda}
    \sqrt{\binom{\nu+\lambda}{\nu-\nu'}
    \binom{\nu-\lambda}{\nu-\nu'}}
    Y_{\nu'}^{\lambda}(\hat{\bm{y}}).
\end{align*}
Observe that
\begin{align}
    \label{eqn:alphamnnp-sum-sq}
    \sum_{\mu=-\nu}^{\nu} \norm{\alpha_{\nu,\nu'}^\mu(\bm{x})}^2
    &\leq
    \left(
    \sqrt{\frac{4\pi}{2(n+\nu)+1}} \binom{n+\nu}{n}
    \norm{\bm{x}}^{-(n+\nu+1)}
    \right)^2
    \sum_{\mu=-\nu}^{\nu}
    \norm{Y_{n+\nu}^\mu(\hat{\bm{x}})}^2\\
    &= \left[\binom{n+\nu}{n}
    \norm{\bm{x}}^{-(n+\nu+1)}\right]^2. \nonumber
\end{align}
Also,
\begin{align}
    \label{eqn:betamnnp-sum-sq}
    \sum_{\mu=-\nu}^{\nu} \norm{\beta_{\nu,\nu'}^\mu(\bm{x},\bm{y})}^2
    &=
    \left(\sqrt{\frac{4\pi}{2\nu'+1}}
    \norm{\bm{x}}^{\nu-\nu'}
    \norm{\bm{y}}^{\nu'}\right)^2 \\
    &\quad\quad
    \sum_{\mu=-\nu}^{\nu}
    \left|
    \sum_{\lambda=-\nu'}^{\nu'}
    c_\nu^{-\mu,\lambda}
    \sqrt{\binom{\nu+\lambda}{\nu-\nu'}
    \binom{\nu-\lambda}{\nu-\nu'}}
    Y_{\nu'}^{\lambda}(\hat{\bm{y}})
    \right|^2.\nonumber
\end{align}
Define a vector-valued function $\bm{v}_{\nu,\nu'}: \mathbb{S}^2 \to \mathbb{C}^{2\nu+1}$, with output vector indexed by $\{-\nu,-\nu+1, \ldots \nu\}$,
whose $\lambda$-th entry is
\[
\left[v_{\nu,\nu'}(\bm{\xi})\right]_{\lambda} =
\begin{dcases}
\sqrt{\binom{\nu+\lambda}{\nu-\nu'}
\binom{\nu-\lambda}{\nu-\nu'}}
Y_{\nu'}^\lambda(\bm{\xi}), & \lambda \in I(0,\nu'),\\
0, & \lambda \in I(0, \nu) \setminus I(0, \nu').
\end{dcases}
\]
In~\eqref{eqn:betamnnp-sum-sq} the summation on the right-hand side of the equation satisfies
\[
    \sum_{\mu=-\nu}^{\nu}
    \left|
    \sum_{\lambda=-\nu'}^{\nu'}
    c_\nu^{-\mu,\lambda}
    \sqrt{\binom{\nu+\lambda}{\nu-\nu'}
    \binom{\nu-\lambda}{\nu-\nu'}}
    Y_{\nu'}^{\lambda}(\hat{\bm{y}})
    \right|^2
    =
    \htranspose{(C_{\nu}\bm{v}_{\nu,\nu'}(\bm{\hat{y}}))}(C_{\nu}\bm{v}_{\nu,\nu'}(\bm{\hat{y}})).
\]
Since $C_{\nu}$ is unitary,
\[
    \htranspose{(C_{\nu}\bm{v}_{\nu,\nu'}(\bm{\hat{y}}))}(C_{\nu}\bm{v}_{\nu,\nu'}(\bm{\hat{y}}))
    =
   \htranspose{\bm{v}_{\nu,\nu'}(\bm{\hat{y}})}
   \bm{v}_{\nu,\nu'}(\bm{\hat{y}})
    =
   \norm{\bm{v}_{\nu,\nu'}(\bm{\hat{y}})}^2
\]
We also have
\begin{align}
\label{eqn:v-norm-bound}
\norm{\bm{v}_{\nu,\nu'}(\bm{\hat{y}})}^2
&= \sum_{\lambda=-\nu'}^{\nu'}
\binom{\nu+\lambda}{\nu-\nu'}
\binom{\nu-\lambda}{\nu-\nu'}
\norm{Y_{\nu'}^{\lambda}(\bm{\hat{y}})}^2
\\
&\leq \binom{\nu}{\nu'}^2 \frac{2\nu'+1}{4\pi}.\nonumber
\end{align}
Combining~\eqref{eqn:v-norm-bound} and~\eqref{eqn:betamnnp-sum-sq},
\begin{equation}
\label{eqn:betamnnp-sum-sq-part2}
    \sum_{\mu=-\nu}^{\nu} \norm{\beta_{\nu,\nu'}^\mu(\bm{x},\bm{y})}^2
    \leq
    \left[\binom{\nu}{\nu'} \norm{\bm{x}}^{\nu-\nu'} \norm{\bm{y}}^{\nu'}\right]^2.
\end{equation}

Applying the Cauchy--Schwarz inequality to~\eqref{eqn:snnu-expansion-simplified}, along with~\eqref{eqn:alphamnnp-sum-sq}~and~\eqref{eqn:betamnnp-sum-sq-part2},
\begin{align}
\label{eqn:local-expansion-of-snnu-bound}
\norm{\local{\bm{c'}}{q}[S^I_{n,\nu}(\bm{c}, \cdot)](\bm{t})}
&\leq
A_n^0 \hspace{-0.7em} \sum_{\nu'=0}^{\min(\nu,q)}
\left|
\sum_{\mu=-\nu}^{\nu}
(-1)^{\nu-\mu}
\alpha_{\nu,\nu'}^{\mu}(\bm{c})
\beta_{\nu,\nu'}^{\mu}(\bm{c'}-\bm{c},Z(\bm{t}-\bm{c'}))
\right| \\
&\leq
A_n^0 \hspace{-0.7em} \sum_{\nu'=0}^{\min(\nu,q)}
\hspace{-0.5em}
\sqrt{
\left(
\sum_{\mu=-\nu}^{\nu}
\norm{\alpha_{\nu,\nu'}^{\mu}(\bm{c})}^2
\right)
\left(
\sum_{\mu=-\nu}^{\nu}
\norm{\beta_{\nu,\nu'}^{\mu}(\bm{c'}-\bm{c},Z(\bm{t}-\bm{c'}))}^2
\right)} \nonumber\\
&= A_n^0 \binom{n+\nu}{n} \norm{\bm{c}}^{-(n+\nu+1)}
\sum_{\nu'=0}^{\min(\nu,q)}
\binom{\nu}{\nu'} \norm{\bm{c'}-\bm{c}}^{\nu - \nu'} \norm{\bm{t}-\bm{c'}}^{\nu'}
\nonumber \\
&\leq \sqrt{\frac{2n+1}{4\pi}}\binom{n+\nu}{n} \norm{\bm{c}}^{-(n+\nu+1)} \left( \norm{\bm{c'}-\bm{c}} + \norm{\bm{t}-\bm{c'}}\right)^{\nu}. \nonumber
\end{align}
We can now bound~\eqref{eqn:m2l-triangle-bound-rhs-part2-expanded} as follows:
\begin{align}
\norm{
    \local{\bm{c'}}{q}[
    \mpole{\bm{0}}{p}[\phi_{\bm{s}}]](\bm{t})-
    \local{\bm{c'}}{q}[\local{\bm{c}}{p}[\mpole{\bm{0}}{p}[\phi_{\bm{s}}]]](\bm{t})
    }
    &=
\norm{
\sum_{n=0}^p
\sqrt{\frac{4\pi}{2n+1}}
\norm{\bm{s}}^n
\local{\bm{c'}}{q}[I_n^0 - \local{\bm{c}}{p}[I_n^0]](\bm{t})
} \nonumber \\
&\leq
\sum_{n=0}^\infty
\sqrt{\frac{4\pi}{2n+1}}
\norm{\bm{s}}^n
\norm{
\local{\bm{c'}}{q}[I_n^0 - \local{\bm{c}}{p}[I_n^0]](\bm{t})
} \nonumber \\
&=
\sum_{n=0}^\infty
\sqrt{\frac{4\pi}{2n+1}}
\norm{\bm{s}}^n
\norm{\sum_{\nu=p+1}^\infty \local{\bm{c'}}{q} \left[ S_{n,\nu}(\bm{c}, \cdot) \right]
  (\bm{t})} \label{eqn:tricky-bound-part1} \\
&\leq
\sum_{n=0}^\infty
\norm{\bm{s}}^n
\sum_{\nu=p+1}^\infty \binom{n+\nu}{n} \norm{\bm{c}}^{-(n+\nu+1)} (r')^{\nu}
\label{eqn:tricky-bound-part2} \\
&=
\sum_{n=0}^\infty
\frac{\norm{\bm{s}}^n}{\norm{\bm{c}}^n}
\sum_{\nu=p+1}^\infty \binom{n+\nu}{n} \frac{(r')^{\nu}}{\norm{\bm{c}}^{\nu+1}}, \nonumber
\end{align}
where~\eqref{eqn:tricky-bound-part1} follows from~\eqref{eqn:local-expansion-of-in0-error}
and~\eqref{eqn:tricky-bound-part2} follows from~\eqref{eqn:local-expansion-of-snnu-bound}
and the fact that $\norm{\bm{c'}-\bm{c}} + \norm{\bm{t} - \bm{c'}} \leq r'$.
After interchanging the  summations in the last line,
\begin{align*}
\norm{
    \local{\bm{c'}}{q}[
    \mpole{\bm{0}}{p}[\phi_{\bm{s}}]](\bm{t})-
    \local{\bm{c'}}{q}[\local{\bm{c}}{p}[\mpole{\bm{0}}{p}[\phi_{\bm{s}}]]](\bm{t})
    }
    &\leq
\sum_{\nu=p+1}^\infty
\frac{(r')^{\nu}}{\norm{\bm{c}}^{\nu+1}}
\sum_{n=0}^\infty
\binom{n+\nu}{n}
\frac{\norm{\bm{s}}^n}{\norm{\bm{c}}^n}
  \nonumber \\
&=
\sum_{\nu=p+1}^\infty
\frac{(r')^{\nu}}{\norm{\bm{c}}^{\nu+1}}
\frac{1}{\left(1 - \frac{\norm{\bm{s}}}{\norm{\bm{c}}}\right)^{\nu+1}}\nonumber \\
&\leq
\frac{1}{R'} \sum_{\nu=p+1}^\infty \frac{(r')^{\nu}}{(R')^{\nu}} \nonumber \\
&= \frac{1}{R' - r'} \left(\frac{r'}{R'}\right)^{p+1}. \nonumber
\end{align*}
\end{proof}

\subsubsection{Remarks}

The theorems stated above may be seen as generalizations of the Greengard--Rokhlin error estimates for approximation of point potentials. The reason for this is that a local expansion $\local{\bm{c}}{q}[f]$ evaluated at its center $\bm{c}$ is the point potential $f(\bm{c})$, i.e., the point potential may be regarded as a local expansion of radius zero. The error estimates for evaluation of truncated local and multipole expansions, in Theorems~\ref{thm:l2qbxl} and~\ref{thm:m2qbxl}, imply the \emph{same} error bound as the analogous truncation estimates for point potentials in~\cite[Lem.~3.4.2]{greengard:1988:thesis}.

In the multipole-to-local case, there are some differences between Theorem~\ref{thm:m2l2qbxl} and the multipole-to-local error estimate in~\cite[Thm.~3.5.5]{greengard:1988:thesis} (cf.~\cite[Thm.~5.4]{beatson:greengard:1997}). An obvious but inessential difference is that the latter only considers multipole and local expansions balls of the same radius, while Theorem~\ref{thm:m2l2qbxl} lets their radii vary. Restated in the language of this paper, the latter only considers the case $R'=R$ and $r'=r$.

For the case that the local and multipole expansion balls are the same radius $r$ and have a separation distance of $R - r$,~\cite[Thm.~3.5.5]{greengard:1988:thesis} implies a truncation error bound for a unit-strength point source of $1/(R-r) (r/R)^{p+1}$. This is \emph{half} as large as the bound that~\eqref{eqn:m2l-bound} implies for the same evaluation scenario.\footnote{The bound $1/(R-r) (r/R)^{p+1}$ can be violated when a multipole center, source, target, and (intermediate) local expansion center are arranged in order in a line. Numerical evidence suggests $2/(R-r) (r/R)^{p+1}$ is sharp, and it should be possible to show based on arguments in this paper.} The reason for this is that~\cite[Thm.~3.5.5]{greengard:1988:thesis} appears only to  model the impact of one intermediate truncation, while for the multipole-to-local case there are two relevant intermediate truncations which introduce error. Our bound takes both into account.

We have already briefly mentioned the two-dimensional case in Remark~\ref{rem:generalization}. In that case, the Fourier modes are the analogues to the spherical harmonics. Two-dimensional Laplace potentials can be expanded in these modes, typically in complex variables~\cite{carrier:1988:adaptive-fmm}. There is a two-dimensional analogue to the theorems stated above. If correctly carried out using the analogous arguments, the 2D bounds for GIGAQBX stated in~\cite{wala:2018:gigaqbx2d} can be improved to eliminate excess leading factors of $p$ and $q$ found in that paper. We leave the details to the reader.

\subsection{Applications to GIGAQBX}

As previously mentioned, the results of the previous section prove Hypotheses 1--3 in~\cite{wala:2019:gigaqbx3d}. A consequence of these is~\cite[Thm.~1]{wala:2019:gigaqbx3d}, which gives an accuracy bound on GIGAQBX~FMM and was stated conditionally based on their truth. Given the truth of these hypotheses, we can restate the theorem unconditionally.

For context, we recall the basic computational setup. To evaluate the single-layer potential $\mathcal{S} \sigma$, GIGAQBX takes as input a discretization of the input geometry $\Gamma$ into $N_S$ source quadrature nodes with corresponding quadrature weights $\{w_i\}_{i=1}^{N_S}$, and nodal density values of the source density. It also takes a set of evaluation targets and QBX expansion centers and radii, along with a mapping from targets to QBX centers. An FMM order $p > 0$ and a QBX order $q > 0$ are also specified. Lastly, a \emph{target confinement factor} $t_f > 0$ is specified.

The target confinement factor is a parameter unique to GIGAQBX, and deserves some explanation, for it is the primary means by which accuracy guarantees are enforced. Recall that, like other FMMs, GIGAQBX uses a quadtree/octree as its computational domain, constructed by recursively partitioning square boxes into equal-sized children until the number of particles per box is below a predetermined cutoff. The target confinement factor establishes a \emph{target confinement region} for a box $b$ of radius $\norm{b}$, which, in three dimensions, is a ball of radius $\sqrt{3}\norm{b}(1+t_f)$ centered at the box center. During octree construction, a QBX expansion ball is not allowed to exceed its box's target confinement region. If placing it in the box would exceed the TCR, it remains placed in the parent box.

\begin{theorem}[{Accuracy of GIGAQBX, cf.~\cite[Thm.~1]{wala:2019:gigaqbx3d}}]%
Fix a target confinement factor $0 \leq t_f < 2 \sqrt{3} - 2 \approx 1.47$. Define
\[
A = \bignorm{\sigma}_\infty \sum_{i=1}^{N_S} \norm{w_i},
\]
and let $R$ be the minimum box radius in the tree. Let $\Phi$ denote the point potential approximation to the single-layer potential $\mathcal{S} \sigma$ (cf.~\eqref{eqn:slp-quad}), and let $\mathcal{A}_{\mathrm{GIGA}}^p[\Phi]$ denote the approximation to this point potential as computed via GIGAQBX\@. Then a constant $M >0$ exists, independent of $t_f$, $\sigma$, the particle distribution, and the QBX/FMM orders, such that the absolute acceleration error in the GIGAQBX FMM at a target $\bm{t}$ associated with a center $\bm{c}$ is bounded by
\begin{multline*}
\norm{\local{\bm{c}}{q}[\Phi](\bm{t})
-
\local{\bm{c}}{q}[\mathcal{A}^p_{\mathrm{GIGA}}[\Phi]](\bm{t})}\\
\leq
\frac{AM}{R} \max \left(
\frac{1}{3 - \sqrt{3}} \left(\frac{\sqrt{3}}{3}\right)^{p+1},
\frac{1}{6-2\sqrt{3}-\sqrt{3}t_f} \left(\frac{\sqrt{3}(1+t_f)}{6 - \sqrt{3}} \right)^{p+1}
\right).
\end{multline*}
\end{theorem}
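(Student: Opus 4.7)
The plan is to derive the bound as a direct consequence of the conditional proof given in~\cite[Thm.~1]{wala:2019:gigaqbx3d}. That proof requires as input three hypotheses on the behavior of FMM translation operators applied to local expansions; those hypotheses are precisely what Theorems~\ref{thm:l2qbxl}, \ref{thm:m2qbxl}, and \ref{thm:m2l2qbxl} now establish. So the argument is essentially to redo the original accuracy-of-GIGAQBX analysis with the word ``assumed'' replaced by ``proven.''

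First, I would decompose the GIGAQBX acceleration error at a fixed target $\bm{t}$ associated with a QBX center $\bm{c}$ into contributions routed through each FMM interaction list. By the design of GIGAQBX, List~1 (near field) and the \ilist{3far}{} / \ilist{4far}{} sublists contribute no acceleration error, since they are evaluated as exact direct point-by-point sums. The remaining error splits into three parts corresponding to interactions from \ilist{4close}{} (source $\to$ local $\to$ local), \ilist{3close}{} (source $\to$ multipole $\to$ local), and \ilist{2}{} (source $\to$ multipole $\to$ local $\to$ local). To each individual source contribution, Theorems~\ref{thm:l2qbxl}, \ref{thm:m2qbxl}, and~\ref{thm:m2l2qbxl} apply, respectively, yielding a per-source acceleration error bound of the form $|w_i||\sigma(\bm{y}_i)|$ times a geometric factor $(r/R)^{p+1}/(R-r)$ (and analogously for the $R',r'$ pair in the List~2 case).

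Next, I would convert the geometric ratios into numerical values determined by the box geometry and the target-confinement factor. A box~$b$ of radius~$\|b\|$ confines QBX balls inside a region of radius $\sqrt{3}\|b\|(1+t_f)$, while \ilist{3close}{} and \ilist{4close}{} source boxes sit at Euclidean separation at least $3\|b\|$ from that region's center. These constraints produce the ratio $\sqrt{3}/3$ in the first term of the stated bound. For \ilist{2}{} interactions, the multipole center and intermediate local expansion center lie at distances dictated by standard FMM well-separatedness together with the TCR; here the two summands of Theorem~\ref{thm:m2l2qbxl} combine and are dominated by the ratio $\sqrt{3}(1+t_f)/(6-\sqrt{3})$. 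The hypothesis $t_f<2\sqrt{3}-2$ is precisely the condition that keeps this ratio strictly below~$1$, ensuring a convergent series.

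Finally, I would sum the per-source bounds over the quadrature nodes (giving the factor $A=\|\sigma\|_\infty \sum_i |w_i|$), over the bounded number of interaction list entries at each level (a dimension-dependent constant), and over tree levels (a geometric series in the level-wise box radius, dominated by the minimum radius~$R$ via the $1/(R-r)$ prefactor). All combinatorial and level-summation constants are absorbed into a single $M$ independent of $t_f$, $\sigma$, the particle distribution, and the orders $p,q$. The main obstacle is purely bookkeeping: verifying that across every level, every \ilist{2}{} configuration, and every possible intermediate center placement within a TCR, the hypotheses of Theorem~\ref{thm:m2l2qbxl} are satisfied with the claimed ratios. That verification was already carried out in~\cite{wala:2019:gigaqbx3d} and need not be redone.
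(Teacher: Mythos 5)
Your proposal is correct and takes essentially the same route as the paper: the paper's proof of this theorem is simply a citation of the conditional argument in the GIGAQBX paper, noting that the hypotheses it relied upon are now supplied by Theorems~\ref{thm:l2qbxl}, \ref{thm:m2qbxl}, and~\ref{thm:m2l2qbxl}. You spell out the interaction-list decomposition and geometric bookkeeping in more detail than the paper does, but you correctly identify (as the paper does) that this verification was already carried out in the cited work and need not be repeated.
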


\begin{proof}
See~\cite{wala:2019:gigaqbx3d}. The only change in notation is the use of the expression $\local{\bm{c}}{q}[\Phi](\bm{t})
-
\local{\bm{c}}{q}[\mathcal{A}^p_{\mathrm{GIGA}}[\Phi]](\bm{t})$ to represent the acceleration error. We have also rearranged the quantities on the right-hand side algebraically to yield a slightly improved final estimate, with no change in the proof itself.
\end{proof}

\section{Numerical validation}
\label{sec:experiments}

The paper~\cite{wala:2019:gigaqbx3d} reports the results of a numerical
study of error in the approximation of local expansions of the 3D~Laplace kernel
using Greengard--Rokhlin translation operators. The setting for the numerical study almost directly mirrors
the settings for Theorems~\ref{thm:m2qbxl},~\ref{thm:l2qbxl},
and~\ref{thm:m2l2qbxl} of
Section~\ref{sec:approximation-results}. Table~\ref{tab:theoretical-vs-empirical-ests} states the numerically derived bounds from that paper. The numerical experiments to obtain these results can be found at~\url{https://github.com/mattwala/gigaqbx-accuracy-experiments}.

The experiment is based on a sampling procedure that samples the error using a number of
geometrical source/target/center positions for each evaluation scenario. The `source' order $p$ and the `target' order $q$ are chosen from the set $\{3, 5, 10, 15, 20\}$. The output of the computation is an estimate of the constant in front of the expression for the error. In the previous section, it was shown that this constant must be $1$ in all cases. Numerically, the value is slightly above $1$, but nevertheless in close agreement with the theoretical value. The reason for the overshoot in the constant appears to be due to the effects of finite precision, as in a number of evaluation scenarios, the error is very small in comparison with the computed value of the potential, whereby a combination of floating point rounding and cancellation could lead to the result.

\begin{table}
  \centering
  \caption{Numerically derived estimates
    in~\cite{wala:2019:gigaqbx3d} for acceleration errors for in Greengard--Rokhlin translation operators for the 3D Laplace kernel. The leading
    constant was numerically derived using a sampling procedure. The notation has been updated to be consistent with the notation in this paper. A leading factor of $1/(4\pi)$ present in the original bounds is removed.
    }
  \label{tab:theoretical-vs-empirical-ests}
  \begin{tabular}{lll}
    \toprule
    Interaction & Numerical error bound from~\cite{wala:2019:gigaqbx3d} \\
    \midrule
    M($p$)$\to$L($q$) (Thm.~\ref{thm:m2qbxl})
    &
    $\displaystyle \frac{1.002}{R - r} \left(\frac{r}{R}\right)^{p+1}$ \\
    L($p$)$\to$L($q$) (Thm.~\ref{thm:l2qbxl})
    &
    $\displaystyle \frac{1.001}{R - r} \left(\frac{r}{R}\right)^{p+1}$ \\
    M($p$)$\to$L($p$)$\to$L($q$) (Thm.~\ref{thm:m2l2qbxl})
    &$\displaystyle \frac{1.001}{R - r} \left( \frac{r}{R} \right)^{p+1} + \frac{1.001}{R' - r'} \left( \frac{r'}{R'} \right)^{p+1}$
    \\
    \bottomrule
  \end{tabular}
\end{table}

\section{Conclusions}
\label{sec:conclusions}

In this paper we have analytically examined what happens when the FMM is modified to output
local expansions rather than point potentials. Our main result is that, in the Laplace case, local expansions under suitably reinterpreted evaluation scenarios exhibit an error bound that very strongly resembles that of point potentials. At a high level, this confirms the intuition---made explicit in the design of GIGAQBX---that they may be treated as a `target with extent.'

This work raises some interesting questions.

\emph{Can the 2D/3D cases be unified?} The techniques in this paper can be used to establish basically the same results for Laplace potentials in two dimensions---incidentally, these improve the original 2D results given in~\cite{wala:2018:gigaqbx2d}. The main difference is the use of a different spherical and solid harmonic basis. Nevertheless, a unified high-level treatment seems possible.

\emph{What happens when the expansion is non-convergent?} An assumption our techniques make is that the expansion being approximated converges. This assumption is artificial. It is still possible to define a local expansion in a region where the expansion does not converge. Indeed, in some cases the expansions made by the QBX~FMM~of~\cite{rachh:2017:qbx-fmm} are in regions where the original local expansion does not converge (as $q \to \infty$), as observed in~\cite[Sec.~2.4]{wala:2018:gigaqbx2d}. Understanding this behavior better may lead to more efficient fast algorithms for this case.

\emph{Can these estimates be made sharper?} There are a couple of ways in which these bounds overapproximate. First, they are not based on the sharpest available point FMM estimates~\cite{petersen:1995:fmm-error-est-3d}. Second, they do not make use of the final expansion order $q$ at all, an extra piece of information which could potentially improve the bound.

\emph{What happens for non-Laplace potentials?} The Helmholtz case remains open. A bound similar to that of the one given in Section~\ref{sec:approximation-theory-results} still applies for Helmholtz potentials and other potentials that use spherical harmonic expansions, but it is likely that improvements  can be made, as with our treatment of the Laplace case.
\section*{Acknowledgments}

The authors' research was supported by the National Science Foundation under
awards DMS-1654756 and SHF-1911019 as well as by the Department of Computer Science
at the University of Illinois at Urbana-Champaign. Any opinions, findings, and
conclusions, or recommendations expressed in this article are those of the
authors and do not necessarily reflect the views of the National Science
Foundation; NSF has not approved or endorsed its content. Portions of this work
are based on the first author's Ph.D.\ thesis.


\bibliography{main}

\end{document}